\documentclass[1p]{elsarticle}
\usepackage{tabularx,array,amsmath}
\usepackage{amsfonts, amsthm}
\usepackage{amssymb}
\newtheorem{theorem}{Theorem}[section]
\newtheorem{lemma}[theorem]{Lemma}
\newtheorem{proposition}[theorem]{Proposition}
\newtheorem{corollary}[theorem]{Corollary}

\theoremstyle{definition}
\newtheorem{definition}[theorem]{Definition}
\journal{Journal of Algebra}

\DeclareMathOperator{\GL}{GL}
\begin{document}
\newcommand{\TT}{\mathbf{\Theta}}
\newcommand{\dd}{\sqrt{d}}
\newcommand{\modd}{\mbox{mod} \ }
\newcommand{\Hz}{H(O_d)}

\title{Counting Irreducible Representations \\ of the  Heisenberg Group Over the Integers of a Quadratic Number Field}
\author{Shannon Ezzat}

%\address{Department of Mathematics and Statistics\\
%University of Canterbury\\
%Private Bag 4800\\
%Christchurch, New Zealand \\
%8140\\
%{\tt shannon.ezzat@pg.canterbury.ac.nz}\\
%Phone: +64 3 364 2987 ext 8377\\
%Fax: +64 3 364 2587}

\begin{abstract}
We calculate the representation zeta function of the Heisenberg group over the integers of a quadratic number field. In general,
%! a
the representation zeta function of a finitely generated torsion-free nilpotent group enumerates equivalence classes of representations, called twist-isoclasses. This calculation is based on an explicit description of a representative from each twist-isoclass. Our method of construction involves studying the eigenspace structure of the elements of the image of the representation and then picking a suitable basis for the underlying vector space.
\end{abstract}
\maketitle

\section{Introduction}
Let $G$ be a finitely generated torsion-free nilpotent group. Let $\chi$ be a 1-dimensional complex representation and $\rho$ an $n$-dimensional complex representation of $G$. We define the product $\chi  \otimes \rho$ to be a \emph{twist} of $\rho$. Two representations $\rho$ and $\rho_*$ are \emph{twist-equivalent} if, for some 1-dimensional representation $\chi$, $\chi \otimes \rho \cong \rho_*.$ This twist-equivalence is an equivalence relation on the set of irreducible representations of $G$. In \cite{LM} Lubotzky and Magid call the equivalence classes \emph{twist-isoclasses}. They also show that there are only finitely many irreducible $n$-dimensional complex representations up to twisting and that for each $n \in \mathbb{N}$ there is a finite quotient $N$ of $G$ such that each $n$-dimensional irreducible representation $\rho$ of $G$ is twist-equivalent to one that factors through $N$. Henceforth we call the complex representations of $G$ simply representations. We denote the number of twist-isoclasses of irreducible representations of dimension $n$ by $r_n(G)$ or $r_n$ if no confusion will arise.

Consider the formal Dirichlet series \begin{equation} \zeta_G(s) = \sum_{n=1}^\infty r_n(G) n^{-s}.
 \end{equation}
 %! If $G$ is a finitely generated torsion-free nilpotent group, by \cite[Lemma 2.1]{SV}, then $\zeta_G(s)$ converges for
If $G$ is a finitely generated torsion-free nilpotent group then by \cite[Lemma 2.1]{SV}, $\zeta_G(s)$ converges
 on a right half plane of $\mathbb{C}$, say $D$, where $D := \{s \in \mathbb{C} \mid \Re(s) > \alpha\}$ for some $\alpha \in \mathbb{R}$; we call $\zeta_G: D \to \mathbb{C}$ the \emph{representation zeta function} of $G$. Note that in \cite{Voll1}, \cite{Voll2}, and \cite{SV} this function is denoted $\zeta_G^{irr} (s)$. For a prime $p,$ define \begin{equation} \zeta_{G,p} (s) := \sum_{n=0}^\infty r_{p^n}(G) p^{-ns}, \end{equation} to be the \emph{$p$-local representation zeta function} of $\zeta_G(s).$ Since $G$ is nilpotent, each finite quotient decomposes as a direct product of its Sylow-$p$ subgroups. Since the irreducible representations of a direct product of finite groups
 %! is are
 are tensor products of irreducible representations of its factors, we have the Euler product $\zeta_G (s) = \prod_{p} \zeta_{G,p} (s)$ \cite[Introduction]{SV}.  Moreover, it was shown by Hrushovski and Martin \cite[Theorem 8.4]{HM} that each $p$-local representation zeta
 %! functions
 function is a rational function in $p^{-s}.$

The idea of using zeta functions to study the representation growth of groups is motivated by subgroup growth, where one uses zeta functions to count finite index subgroups. The study of subgroup growth of finitely generated nilpotent groups by zeta functions was introduced in \cite{GSS}. In that paper, the authors calculate the normal subgroup zeta function of the Heisenberg group over the ring of integers of a number field of degree at most two \cite[Prop. 8.2]{GSS}. These zeta functions were given, in part, in terms of the Dedekind zeta function of the associated number field. Research on subgroup zeta functions of nilpotent groups continued in papers such as \cite{duS1, duS2, duSG}.

One can study the growth rate of the sequence $r_n(G)$ without necessarily explicitly constructing the zeta function. We call this study \emph{representation growth.} Representation growth has been used to study other classes of groups. We briefly mention some work done in these areas. For the following groups, $r_n(G)$ counts irreducible representations, as opposed to twist-isoclasses as with nilpotent groups. The idea of using zeta functions to study representation growth was introduced in \cite{Witten}, in which Witten studies compact Lie groups.
%! Later, representation zeta functions were studied in \cite{LMar}, where Lubotzky and Martin use representation zeta functions to study $\mathbf{G}$-arithmetic groups,
Later, representation growth was studied for $S$-arithmetic groups by Lubotzky and Martin \cite{LMar} and, using the language of representation zeta functions, Larsen and Lubotzky \cite{LL}.
%! and in \cite{JZ} where Jaikin
Jaikin, in \cite{JZ},
%! where he gives rationality results concerning
studies representation zeta functions of compact $p$-adic analytic groups with property FAb. In \cite{Voll4} (and the research announcement \cite{Voll3}) Avni et al.\ study compact $p$-adic analytic groups and arithmetic groups using representation zeta functions, %! In \cite{Avnietal}, Avni et al.\ study representations of arithmetic lattices and prove a conjecture by Larsen and Lubotzky \cite{LL}.
proving \cite[Corollary D]{Voll4} a conjecture of Larsen and Lubotzky \cite[Conjecture 1.5]{LL}.  In \cite{Avni} Avni shows that arithmetic groups in characteristic zero which satisfy the congruence subgroup property have representation zeta functions with rational abscissa of convergence. Bartholdi and de la Harpe study representation zeta functions of wreath products with finite groups \cite{BdlH}. In \cite{KN}, Kassabov and Nikolov study representation growth of some profinite groups. Craven gives lower bounds for representation growth for profinite and pro-$p$ groups \cite{Craven}.

Representation growth of finitely generated nilpotent groups has been studied in \cite{Voll1} by Voll and \cite{SV} by Stasinski and Voll. Very few examples of representation zeta functions of finitely generated nilpotent groups have appeared in the literature (see \cite[Theorem B]{SV} and \cite[Example 8.12]{HM}). Let
\begin{equation}\label{Heisgroup}
H(\mathbb{Z}):= \langle x,y,z \mid [x,y]=z \rangle
 \end{equation} be the Heisenberg group over the rational integers and let $\zeta(s)$ be the Riemann zeta
 %! function .
 function.
%! The result in \cite[Example 8.12]{HM}, namely the representation zeta function
%! \begin{equation}\label{Heis}
%! \zeta_{H(\mathbb{Z})}(s) = \frac{\zeta(s-1)}{\zeta(s)},
%! \end{equation}
%! has a simpler shape than the corresponding normal subgroup zeta function in \cite{Smith}.
Then, by \cite[Theorem 8.12]{HM},
$\zeta_{H(\mathbb{Z})}(s)$ is given by
\begin{equation}\label{Heis}
\zeta_{H(\mathbb{Z})}(s) = \frac{\zeta(s-1)}{\zeta(s)}
\end{equation}
(the coefficients $r_{p^n}(H({\mathbb Z}))$ were originally calculated in \cite[Theorem 5]{NM}). This has a simpler shape than the corresponding normal subgroup zeta function in \cite[Chapter 15]{LS}.

Let $d$ be a square-free integer and define $O_d$ to be the ring of integers of the number field $\mathbb{Q}(\sqrt{d})$. The Heisenberg group over $O_d$, which we denote $\Hz$, is the group of $ 3 \times 3$ upper unitriangular matrices with entries in $O_d$.

Let $\zeta^{\mathbf{D}}_Q$ be the Dedekind zeta function of a number field $Q.$ The main theorem of this paper is as follows:

\begin{theorem}\label{main}
%! [Sentence deleted.]  Let $\Hz$ be the Heisenberg group over the ring of integers of $\mathbb{Q}(\sqrt{d})$.  Then
The representation zeta function of $\Hz$ is
$$\zeta_{\Hz} (s) = \dfrac {\zeta^{\mathbf{D}}_{\mathbb{Q}(\dd)} (s-1)}{\zeta^{\mathbf{D}}_{\mathbb{Q}(\dd)} (s)}.$$
\end{theorem}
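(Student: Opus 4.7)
The plan is to exploit the Euler product $\zeta_{\Hz}(s) = \prod_p \zeta_{\Hz,p}(s)$ noted in the introduction, together with the Euler product for the Dedekind zeta function, so as to reduce the claim to the local identity
$$\zeta_{\Hz,p}(s) = \prod_{\mathfrak{p} \mid p}\frac{1 - N(\mathfrak{p})^{-s}}{1 - N(\mathfrak{p})^{1-s}}$$
at each rational prime $p$. The right-hand side takes one of three shapes depending on whether $p$ splits, remains inert, or ramifies in $O_d$, so I would organize the computation of $\zeta_{\Hz,p}(s)$ by these three cases and verify each separately.

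\textbf{Setup and central characters.} Fix a $\mathbb{Z}$-basis $e_1, e_2$ of $O_d$, and describe $\Hz$ via generators $X_i, Y_i, Z_i$ ($i = 1, 2$) corresponding to placing $e_i$ in the $(1,2)$, $(2,3)$, $(1,3)$-entry of the upper-unitriangular matrix, with commutator relations $[X_i, Y_j] = \prod_k Z_k^{c_{ij}^k}$ dictated by the structure constants of multiplication in $O_d$. Because the center $\langle Z_1, Z_2\rangle$ coincides with the commutator subgroup, every $1$-dimensional representation is trivial on the center, and so twisting preserves the central character. Hence each twist-isoclass of irreducible representations has a well-defined central character, a finite-order additive character of $O_d$; the first substantive step is to show that this assignment is a bijection between twist-isoclasses and such characters.

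\textbf{Explicit construction via eigenspaces.} For each central character $\omega$, I would construct a representative of the corresponding twist-isoclass as follows. Since $\rho(X_1)$ and $\rho(X_2)$ commute, choose a simultaneous eigenbasis of the representation space. The relation $\rho(X_i)\rho(Y_j) = \omega(e_i e_j)\, \rho(Y_j)\rho(X_i)$ then forces each $\rho(Y_j)$ to permute the joint $X$-eigenspaces, translating the eigenvalue tuple by the character of $O_d$ given by $y \mapsto (\omega(e_1 y), \omega(e_2 y))$ evaluated at $e_j$. Irreducibility forces the representation space to consist of a single such orbit, so $\dim\rho$ equals the size of the image of this character, namely $[O_d : I_\omega]$ for an appropriate ideal $I_\omega$ of $O_d$ attached to $\omega$. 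Conversely, for each $\omega$ one writes down an explicit representation on $\mathbb{C}^{[O_d:I_\omega]}$ using the diagonal and shift operators just described and verifies irreducibility.

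\textbf{Counting and matching.} With the dimension formula $\dim\rho_\omega = [O_d : I_\omega]$ in hand, one computes $r_{p^n}(\Hz)$ by enumerating the central characters of $p$-power order stratified by $I_\omega$, whose norm is then a $p$-power. Summing yields $\zeta_{\Hz,p}(s)$ as an explicit generating function, and in each of the three splitting cases a direct simplification matches it with the predicted Dedekind local factor. The main technical obstacle is the explicit construction step: identifying $I_\omega$ with an actual ideal of $O_d$ (rather than merely an additive subgroup), showing that the eigenspace orbits always have the expected size, and handling characters whose associated ideal is a product of distinct prime ideals above $p$ (relevant in the split case) or a higher power of the ramified prime, where the quotient structure of $O_d$ is most delicate.
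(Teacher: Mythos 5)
Your proposal is sound and follows the same overall strategy as the paper: reduce to $p$-local factors via the Euler product, then parameterize twist-isoclasses by their central characters via an explicit eigenspace/orbit decomposition, and finally count. The eigenspace argument you sketch (simultaneously diagonalize $\rho(X_1),\rho(X_2)$, observe that the $\rho(Y_j)$ permute joint eigenspaces, and use irreducibility to force a single orbit) is precisely the mechanism behind the paper's Lemmas 3.4--3.8, which show the eigenspaces of $A$ have uniform $p$-power dimension and are permuted transitively.

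Where you genuinely streamline is the counting step. The paper encodes the datum $(\Lambda,\Lambda_d)$ via the integer $l$ with $\Lambda_d=\Lambda^l$, derives the valuation conditions $v_p(l^2-D_l)=r-m$ (and the analogous Case-2 condition with $l^2 D_l-1$), and then counts solutions by Hensel's lemma separately in the inert, split, and ramified cases. You instead attach to each central character $\omega$ the $O_d$-ideal $I_\omega=\{a\in O_d:\omega(aO_d)=1\}$, observe $\dim\rho_\omega=[O_d:I_\omega]$, and count characters stratified by $I_\omega$ -- in effect a M\"obius inversion over the ideal lattice, which yields the Dedekind local factor $\prod_{\mathfrak p\mid p}(1-N\mathfrak p^{-s})/(1-N\mathfrak p^{1-s})$ directly and makes the appearance of $\zeta^{\mathbf D}_{\mathbb Q(\sqrt d)}$ transparent rather than emergent from case-by-case Hensel computations. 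This also sidesteps the paper's Case-1/Case-2 bookkeeping (which of $\Lambda,\Lambda_d$ is primitive). The two are the same computation in different clothes, but your framing buys conceptual clarity at the point where the paper is most technical; the cost is that the two facts you flag as the "main technical obstacle" -- the bijection between twist-isoclasses and nontrivial $p$-power-order central characters, and $\dim\rho_\omega=[O_d:I_\omega]$ with $I_\omega$ genuinely an ideal -- still require the careful eigenspace argument (including that joint eigenspaces are one-dimensional by irreducibility), which is exactly the content of the paper's Lemmas 3.5, 3.7, and 3.8 together with the explicit block-matrix normal forms.
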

We remark that  an analogous theorem holds for the Heisenberg group over the rational integers; that is, if $\mathbb{Q}(\sqrt{d})$ is replaced by $\mathbb{Q}$ (see Equation \ref{Heis}).

We can use this theorem to determine the abscissa of convergence of these representation zeta functions.

\begin{corollary}
For any quadratic number field $\mathbb{Q}(\dd)$ the abscissa of convergence of $\zeta_{\Hz}(s)$ is $2.$
\end{corollary}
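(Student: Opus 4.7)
The plan is to combine the explicit formula from Theorem \ref{main} with the standard analytic properties of the Dedekind zeta function, and then invoke Landau's theorem on Dirichlet series with non-negative coefficients. Since $r_n(\Hz) \in \mathbb{Z}_{\geq 0}$, the series $\zeta_{\Hz}(s) = \sum_n r_n(\Hz)\, n^{-s}$ has non-negative coefficients, so by Landau's theorem its abscissa of convergence coincides with the rightmost real singularity of the function it defines. Hence it suffices to locate the rightmost real singularity of
$$
\frac{\zeta^{\mathbf{D}}_{\mathbb{Q}(\dd)}(s-1)}{\zeta^{\mathbf{D}}_{\mathbb{Q}(\dd)}(s)}.
$$

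First I would recall the standard facts for the Dedekind zeta function $\zeta^{\mathbf{D}}_K$ of a number field $K$: it admits an Euler product over prime ideals that converges absolutely for $\Re(s) > 1$, it extends meromorphically to $\mathbb{C}$, and its only pole is a simple pole at $s=1$. The Euler product shows $\zeta^{\mathbf{D}}_K(s) \neq 0$ for $\Re(s) > 1$. Applying these to $K = \mathbb{Q}(\sqrt{d})$, the numerator $\zeta^{\mathbf{D}}_{\mathbb{Q}(\dd)}(s-1)$ is holomorphic on $\{\Re(s) > 1\} \setminus \{2\}$ with a simple pole at $s=2$, while the denominator $\zeta^{\mathbf{D}}_{\mathbb{Q}(\dd)}(s)$ is holomorphic and non-vanishing on $\Re(s) > 1$.

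Consequently the quotient is holomorphic on $\{\Re(s) > 1\} \setminus \{2\}$, with a simple pole at $s=2$ (note that the pole of the denominator at $s=1$ merely produces a zero of the quotient there, not a singularity). Thus $s=2$ is the rightmost real singularity, and by Landau's theorem the abscissa of convergence of $\zeta_{\Hz}(s)$ is exactly $2$.

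There is no genuine obstacle here; the only point requiring care is the elementary one of not mistaking $s=1$ for a singularity of the quotient, and of correctly invoking Landau's theorem, which is applicable precisely because $r_n(\Hz) \geq 0$.
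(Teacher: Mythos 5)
The paper states this corollary without an explicit proof, treating it as an immediate consequence of Theorem \ref{main}; your argument is the standard one that the author evidently has in mind. Your reasoning is correct: non-negativity of $r_n(\Hz)$ plus Landau's theorem reduces the problem to locating the rightmost real singularity of $\zeta^{\mathbf{D}}_{\mathbb{Q}(\dd)}(s-1)/\zeta^{\mathbf{D}}_{\mathbb{Q}(\dd)}(s)$, and the standard facts about $\zeta^{\mathbf{D}}_K$ (simple pole at $s=1$, non-vanishing for $\Re(s)>1$ via the Euler product, meromorphic continuation) give a simple pole at $s=2$ as the rightmost real singularity; you also correctly note that the denominator's pole at $s=1$ is a zero of the quotient, not a singularity.
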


%!Groups of the following type are an important class of examples: let $\mathbf{G}$ be a group scheme defined over a ring of integers $O$ of a number field $K$, let $O^\prime$ be a finite extension of $O$, and let $S(O^\prime)$ be the group of $O$-points of $S.$ The papers
Let $\mathbf{G}$ be a group scheme defined over
%! 
the ring of integers $O$ of a number field $K$.  For $O'$ a ring extension of $O$, denote the group of $O'$-points of $\mathbf{G}$ by $S(O')$.  Groups of the form $S(O')$ give an important class of examples for representation growth.  The papers
%! [Added Voll4]
\cite{LL, avnietal, Voll4, Avni}
%! study semisimple $S,$ while \cite{SV} studies unipotent $S.$
deal with semisimple $\mathbf{G}$, while \cite{SV} deals with unipotent $\mathbf{G}$.

Suppose the group scheme $\mathbf{G}$ is unipotent. Stasinski and Voll, in \cite[Theorem A]{SV} and \cite[Remark 2.3]{SV}, show that for non-zero prime ideals $P \lhd O^\prime$ the following Euler factorization holds:
\begin{equation}
\zeta_{\mathbf{G}(O^\prime)}(s)=\prod_P \zeta_{\mathbf{G}(O^\prime),P}(s)
\end{equation}
where $\zeta_{\mathbf{G}(O^\prime),P}(s)$ counts continuous representations of $\mathbf{G}(O^\prime_P)$
%! and where
and $O^\prime_P$ is the completion of $O^\prime$ at $P.$
For group schemes associated to a nilpotent Lie lattice the authors also show that, for almost all prime ideals, the local representation zeta functions behave uniformly under extension of scalars; that is, if $O^\prime_P$ is a finite extension of $O_P$ then $\zeta_{\mathbf{G}(O^\prime)}(s)$ comes from a rational function whose form depends only on $\mathbf{G}$ and not on $O^\prime_P$ (see \cite{SV} for a precise statement).
%! Theorem A also tells us that almost all $P$-local representation zeta functions satisfy a functional equation
Corollary 1.3 of \cite{SV} also tells us that the $P$-local representation zeta functions $\zeta_{\mathbf{G}(O^\prime),P}(s)$ satisfy a functional equation which is a refinement of
%! [Check I've got this correct!] \cite[Theorem D]{voll1}
\cite[Theorem D]{Voll1}
%! [Reference removed: it seems inappropriate.] (see our Theorem \cite{main} below)
and, additionally, that these local zeta functions are rational functions in $q^{-fs},$ where $q$ is the cardinality of the associated residue field and $f$ is the relative degree of inertia. Moreover, \cite[Theorem B]{SV} calculates $P$-local representation zeta functions of three families of groups arising from unipotent group schemes; indeed, the representation zeta function of $\Hz$, Theorem \ref{main} in our paper,  appears as a special case of their result
%! [Text added.]
(note that we can identify $H(O)$ with the group of $O$-points of a unipotent group scheme $H$ defined over ${\mathbb Z}$).

As a special case of \cite[Theorem B]{SV}, Stasinski and Voll prove the following result, which generalizes Theorem \ref{main}.

\begin{theorem}\label{conjecture}
Let $K$ be an arbitrary algebraic number field and $O$ its ring of integers. Then

$$ \zeta_{H(O)}(s) = \dfrac{\zeta^{\mathbf{D}}_K (s-1)}{\zeta^{\mathbf{D}}_K (s)}. $$
\end{theorem}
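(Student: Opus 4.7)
The plan is to reduce to a local computation at each prime of $O$ and then take an Euler product. By the factorization
\[
\zeta_{H(O)}(s) \;=\; \prod_{P} \zeta_{H(O),P}(s)
\]
of \cite[Theorem A]{SV} recalled above, together with the Euler product $\zeta^{\mathbf{D}}_K(s) = \prod_P (1 - q_P^{-s})^{-1}$ (where $P$ runs over the non-zero prime ideals of $O$ and $q_P := |O/P|$), the theorem reduces to proving
\[
\zeta_{H(O),P}(s) \;=\; \frac{1 - q_P^{-s}}{1 - q_P^{1-s}}
\]
for every such $P$.

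Fix $P$, set $q = q_P$ and $R_m := O/P^m$, and count twist-isoclasses level by level in $H(R_m)$. The centre of $H(R_m)$ is isomorphic to $R_m$ as an abelian group and lies inside the commutator subgroup, so the central character of an irreducible representation is invariant under twisting. The key representation-theoretic input---supplied in the body of the paper for the quadratic case via the eigenspace/basis method described in the abstract, and available in general through the Kirillov orbit method for the $2$-step nilpotent group scheme $H$---is the following: for each additive character $\chi \in \mathrm{Hom}(R_m,\mathbb{C}^\times)$ of conductor $P^e$ with $0 \le e \le m$, there is exactly one twist-isoclass of irreducible representations of $H(R_m)$ having $\chi$ as its central character, and its dimension equals $q^e$; in particular $e = 0$ gathers all one-dimensional representations into a single class.

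A twist-isoclass appears genuinely at level $m$---that is, does not factor through $H(R_{m-1})$---exactly when $\chi$ is a primitive character of $R_m$, i.e.\ has conductor $P^m$. There are $|R_m^\times| = q^{m-1}(q-1)$ such characters, each yielding one twist-isoclass of dimension $q^m$. Summing the resulting Dirichlet series,
\[
\zeta_{H(O),P}(s) \;=\; 1 + \sum_{m \ge 1} q^{m-1}(q-1)\,q^{-ms} \;=\; 1 + \frac{(q-1)q^{-s}}{1 - q^{1-s}} \;=\; \frac{1 - q^{-s}}{1 - q^{1-s}},
\]
and taking the product over $P$ yields the stated quotient of Dedekind zeta functions. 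The main obstacle is the representation-theoretic input above: proving, uniformly in $P$ and $m$, the existence and uniqueness of a twist-isoclass of the expected dimension attached to each central character of the Artinian local ring $R_m$. This is carried out by the eigenspace/basis analysis in the body of the paper in the quadratic case, and the uniformity-under-extension-of-scalars statement of Stasinski--Voll cited just before the theorem is precisely what allows the argument to go through for an arbitrary number field $K$, making the final formula depend on $K$ only through the residue field sizes $q_P$.
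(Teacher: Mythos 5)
The paper does not actually prove Theorem~\ref{conjecture}: it is explicitly attributed to Stasinski and Voll \cite[Theorem B]{SV}, and the remark immediately after notes it was only a conjecture in an earlier version of this paper. The body of the paper proves only the quadratic special case (Theorem~\ref{main}), and does so by a different route --- an eigenspace/basis analysis tied to the explicit six-generator presentation of $H(O_d)$, reducing to two case distinctions and a count of solutions to congruences in $l$ modulo $p^{r-m}$. So there is no in-paper proof to match your argument against line by line; the relevant comparison is with \cite{SV}.

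Your outline is nevertheless the ``right'' conceptual proof, and the arithmetic is correct: the Euler decomposition from \cite[Theorem A]{SV}, the reduction to $\zeta_{H(O),P}(s) = (1-q_P^{-s})/(1-q_P^{1-s})$, the observation that the central character is a twist-invariant because $Z(H) \leq [H,H]$, and the Dirichlet-series computation all check out (and recombining the $P$-local factors over the primes above a rational $p$ does reproduce the paper's inert/split/ramified formulas). The one substantive gap is the representation-theoretic input itself: that for each primitive additive character $\chi$ of $O/P^m$ there is \emph{exactly one} twist-isoclass of irreducibles of $H(O/P^m)$ with central character $\chi$, of dimension $q_P^m$. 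This is asserted, not proved, and the three justifications you gesture at do not close it: the eigenspace analysis in the present paper is specific to quadratic $K$ and does not self-evidently extend in degree $>2$; the Kirillov orbit method, as the paper itself notes, generically excludes finitely many primes; and the ``uniformity under extension of scalars'' result of \cite{SV} is a statement comparing local factors at almost all primes, not a mechanism for establishing the local claim (and it too omits exceptional primes). The claim \emph{is} true for every $P$ and $m$ and has a clean uniform proof via Stone--von Neumann/Clifford theory over the finite local ring $O/P^m$: the commutator pairing $(O/P^m)^2 \times (O/P^m)^2 \to O/P^m$, $\big((a,b),(a',b')\big) \mapsto ab'-a'b$, composed with a primitive $\chi$ is nondegenerate, so there is a unique irreducible with that central character, of the stated dimension, and all one-dimensional twists preserve it; this is essentially \cite[Section 2.4]{SV}. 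Until that ingredient is supplied (or cited precisely), what you have is a correct reduction plus a correct computation, but not yet a proof.
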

We remark that Theorem \ref{conjecture} was stated as a conjecture in an earlier version of the present paper, before the appearance of \cite{SV}.

When the group scheme $\mathbf{G}$ is semisimple there are some similar results to the unipotent case discussed above.
%! Analogous to
As in the unipotent case, Larsen and Lubotzky \cite[Proposition 1.3]{LL} showed that, for semisimple arithmetic groups satisfying the congruence subgroup property, there is a decomposition of representation zeta functions into local factors
%! (with an extra infinite factor).
(with an extra factor coming from the prime at infinity).
%! [Text deleted.]  in the former case
Work by Avni et al.,
%!\ ,
\cite[Theorem A]{Voll3} \cite[Theorem A]{Voll4}, has proven that the representation zeta functions of a certain class of compact $p$-adic analytic groups related to $\mathbf{G}$ %! [Added "also" to link back to unipotent case.]
also behave uniformly under extension of scalars.

%For non-zero prime ideals $P$ of $O$, let
%\begin{equation}
%\zeta_{H(O),P}(s)= \dfrac{\zeta^{\mathbf{D}}_{Q,P} (s-1)}{\zeta^{\mathbf{D}}_{Q,P} (s)}
%\end{equation}
%where $\zeta^{\mathbf{D}}_{Q,P}$ is the $P$-local Dedekind zeta function of $Q.$ From Theorem \ref{conjecture} and the previous equation it is easy to see that
%\begin{equation}\label{Pfactor}
%\zeta_{H(O)}(s) = \prod_{P \lhd O} \zeta_{H(O),P}(s)
% \end{equation}

%Work by Avni et al.\ , \cite[Theorem A]{Voll3}, has proven that a class of compact $p$-analytic groups, namely those with property FAb, are somehow robust over the choice of number field; that is, the representation zeta function of these groups is an invariant of the group scheme associated to the group. These groups, while not finitely generated nilpotent groups, share the property with nilpotent groups that there is a Euler factorization of their representation zeta functions, with an infinite factor in the case of FAb compact $p$-analytic groups.  Moreover, the main theorem of this paper shows that, in fact, there is a finer Eulerian product of $\zeta_{\Hz}(s)$ which is indexed by the prime ideals of $O_d.$  This fact is analogous to results by Larsen and Lubotzky on semisimple arithmetic groups with the congruence subgroup property \cite[Proposition 1.3]{LL}. Let $P$ be a (non-zero) prime ideal of $O_d.$ Similar to the result of Hrushovski and Martin \cite{HM}, these $P$-local Dedekind zeta functions are rational in $q^{-s},$ where $q$ is the cardinality of the associated residue field. We make the following conjecture:

From now on we restrict our attention to finitely generated torsion-free nilpotent groups. In the paper \cite{Voll1}, Voll introduces a method to parameterize representations in order to calculate $p$-local representation zeta functions. One counts twist-isoclasses of a finitely generated torsion-free nilpotent group $G$ by counting the coadjoint orbits of the Lie ring associated to $G$. However, this Kirillov orbit method has the restriction that
%! {Two words added]
in general it excludes a finite number of primes. We note that both the Kirillov orbit method and the method in
%! this
the present paper reduce to counting solutions to polynomial equations, although in the Kirillov orbit method this counting is disguised in the machinery of Igusa local zeta functions.

The method used in
%! this
the present paper is quite general.
%! [Sentence deleted.] This method makes no extra hypotheses besides the choice of group.
%! Thus, the techniques shown in this paper
Our techniques could, in principle, be used to calculate all $p$-local representation zeta functions, and therefore the global representation zeta function, of any finitely generated torsion-free nilpotent group. However, in practice this may not be
%! feasible;
feasible---as the complexity of the eigenspace structure of the irreducible representations increases, the complexity of the calculation would increase quite quickly.  Note that our method relies on less mathematical machinery than the Kirillov orbit method and thus can be appreciated with minimal technical background;
%! [Text added.]
moreover, it explicitly gives all of the irreducible representations. The deeper methods from \cite{Voll1} and \cite{SV} allow for easier computations in many cases since, in these methods, one counts characters without constructing the representations explicitly.
%! [Sentence added.]

%! [Paragraph deleted: the essential ideas now appear elsewhere.]
%The method of this paper does not use any other information besides the group presentation of $\Hz.$ Therefore the presentation of a new group obtained by extending scalars (see, for example,  \cite[Section C.3]{AB}), and the calculation of their respective representation zeta functions, will differ. The method of extension of scalars of groups  is useful in work that uses the fact that $\Hz$ can be interpreted as being associated to a group scheme, as in \cite{SV}. That method allows the calculation of the $p$-local (and $P$-local) representation zeta functions of a larger family of groups than our method can deal with. However, our method does explicitly give all irreducible representations of $\Hz,$ as opposed to just knowing the number of twist-isoclasses of each dimension.

The methods of \cite{Voll1} and \cite{SV} are not valid for all primes for an arbitrary nilpotent group. It is worth noting, however, that \cite[Section 2.4]{SV} introduces a method for class-2 finitely generated nilpotent groups obtained from unipotent group schemes, including the groups studied in 
%! this
the present paper, that is valid for all primes. The constructive method introduced
%! in the present paper
here differs from the previously mentioned methods.
%! [Sentence deleted.] Indeed, due to the lack of hypotheses needed for the constructive method, it is general enough not to exclude primes because of the method itself. 
%! In fact, the author
The author will use it to calculate the
%! [Word added.]
full global representation zeta functions of the  groups $M_n:=\langle y,x_1, \ldots, x_n \mid [y,x_i]=x_{i+1}, i= 1, \ldots, n-1\rangle, n \in \{3,4\}$ of nilpotency
%! class-$n$
class $n$ in a forthcoming paper.
%! [Following sentence deleted: I don't think the point is clear.]
% One can calculate the local zeta functions for $p=2$ of the class-3 case, and $p=2,3$ of the class-4 case even though these are special cases of the calculation.

In \cite{Voll1} it was discovered that representation zeta functions satisfy the following functional equation:

\begin{theorem} {\cite[Theorem D]{Voll1}}\label{voll}
Let $G$ be a finitely generated torsion-free nilpotent group with derived subgroup $G'$ with Hirsch length $d'$. Then, for almost all primes,
\begin{equation}
\zeta_{G,p} (s)|_{p\rightarrow p^{-1}} = p^{d'} \zeta_{G,p} (s).
\end{equation}
\end{theorem}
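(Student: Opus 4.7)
The plan is to realize $\zeta_{G,p}(s)$ as a $p$-adic integral of Igusa type and deduce the functional equation from an intrinsic symmetry of such integrals under $p\mapsto p^{-1}$. First, for almost all primes $p$, I would apply the Kirillov orbit method to the pro-$p$ completion $G\otimes\mathbb{Z}_p$: following work of Howe, twist-isoclasses of continuous irreducible representations of a given dimension correspond bijectively to coadjoint orbits of the corresponding size in $L^\ast=\mathrm{Hom}(L,\mathbb{Q}_p/\mathbb{Z}_p)$, where $L$ denotes the $\mathbb{Z}_p$-Lie lattice associated to $G$. This reduces the computation of $r_{p^n}(G)$ to a purely Lie-theoretic orbit count; the excluded primes are those at which Kirillov's construction fails or at which the relevant structure constants acquire denominators.

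Second, I would parameterize coadjoint orbits via the commutator form $B\colon L/Z(L)\times L/Z(L)\to L'$. For each $\psi\in (L')^\ast$ the orbits above $\psi$ are governed by the elementary divisors of the antisymmetric matrix $B_\psi$, whose entries are linear forms in the coordinates of $\psi$. Summing the weights $p^{-ns}$ across all $\psi$, the Dirichlet series $\zeta_{G,p}(s)$ becomes a $p$-adic integral over $(L')^\ast\otimes\mathbb{Z}_p$ whose integrand is a $p$-adic monomial in the Pfaffian minors of $B_\psi$.

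Third, to extract the functional equation I would apply a principalization, i.e.\ a log-resolution $Y\to\mathrm{Spec}\,\mathbb{Z}_p[(L')^\ast]$ of the ideal generated by the relevant Pfaffians. On $Y$ the integrand becomes a product of monomials in local coordinates, and Denef's formula expresses the integral as a sum of rational functions in $p$ and $p^{-s}$ indexed by strata of the exceptional divisor, each with numerator essentially counting $\mathbb{F}_p$-points of the stratum. Poincar\'e-duality-type reciprocities among these point counts, together with a careful bookkeeping of the numerical data (multiplicities and log discrepancies) attached to each exceptional component, produce an involution on strata that implements the substitution $p\mapsto p^{-1}$. The overall dimension shift $p^{d'}$ emerges from the dimension of the ambient space of integration, which coincides with the Hirsch length of $G'$.

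The main obstacle is the final step: showing that after resolution the local contributions combine into precisely the scalar $p^{d'}$ rather than a more complicated prefactor. This is where the specific antisymmetric structure of $B$ is essential, as Pfaffian loci enjoy a self-dual stratification absent for generic polynomial ideals; one may either verify this directly on the resolution or invoke Denef--Meuser-type results on functional equations for Igusa zeta functions of suitably nondegenerate polynomials. The price paid is the exclusion of the finitely many primes of bad reduction for the chosen resolution, giving the \textquotedblleft almost all primes\textquotedblright{} qualification in the statement.
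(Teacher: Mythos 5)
This theorem is cited by the paper from \cite[Theorem~D]{Voll1} and is not re-proved here, so the comparison must be against Voll's original argument. Your first two steps match Voll's method: he does invoke the Kirillov orbit method (following Howe) to turn twist-isoclass counting into coadjoint-orbit counting over $\mathbb{Z}_p$, and he does reduce the local zeta function to a $p$-adic integral governed by the elementary divisors of the antisymmetric commutator matrix, with Pfaffian minors appearing as the relevant determinantal data. Your observation that the exponent $d'$ is the rank of the space being integrated over (the dual of the derived subalgebra, of Hirsch length $d'$) is also correct.

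Your third step, however, is not Voll's and, as you half-suspect, does not go through as sketched. Voll does not principalize and then apply Denef's resolution formula together with Poincar\'e duality \`a la Denef--Meuser. The Denef--Meuser functional equation applies to the Igusa local zeta function of a \emph{single homogeneous} polynomial; the integrals arising here are multivariate, built from several Pfaffian minors of different degrees, and are of ``cone integral'' type, for which a log-resolution does not by itself yield any functional equation (du Sautoy--Grunewald already showed such integrals generically fail to satisfy one). Voll's actual route is combinatorial: he rewrites the integral as a sum over flag data (Gaussian binomials indexed by a Bruhat/Weyl poset), isolates what he calls Igusa functions, and proves a functional equation for these directly via Coxeter-length and reciprocity identities, bypassing resolution of singularities and \'etale cohomology entirely. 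So the gap you flagged at the end of your sketch is real, and it is not closed by appealing to a ``self-dual Pfaffian stratification''; the needed symmetry only becomes visible after Voll's flag-variety reparameterization, which is the genuinely new ingredient of his proof.
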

%! [Added \noindent and deleted \\ above.]
\noindent This functional equation has been refined, as previously mentioned, by Stasinski and Voll in \cite[Theorem A]{SV}.  They give a functional equation for $\zeta_{\Hz,p}(s)$ in \cite[Corollary 1.3]{SV}.  We note that $\zeta_{\Hz,p} (s)$ does indeed satisfy the functional equation of Theorem \ref{voll} if $p$ is not ramified.
%! {Three sentences deleted.  Stasinski and Voll give the precise functional equation.]Also of note is that the $p$-local representation zeta functions of the exceptional primes almost satisfy the functional equation. In fact, they satisfy Theorem \ref{voll} if we take $d'$ to be $1$ instead of the Hirsch length. This idea of ``almost'' is explained by the functional equation in \cite[Theorem A]{SV}.

%! [Sentence deleted.]
% Let $P$ be a non-zero prime ideal of $O_d.$
%! If the rational prime $p=\prod_{i=1}^j P_i,$ where $P_i \in O_d$ and $j \in \{1,2\},$ is unramified, we call $\prod_{i=1}^j \zeta^\mathbf{D}_{\mathbb{Q}(\sqrt{d}),P_i}$ the $p$-local Dedekind zeta function.
Let $p$ be a rational prime and write $p=\prod_{i=1}^j (P_i)^{e_i},$ where $j \in \{1,2\},$  $P_i $ is a prime ideal of $O_d,$ and $e_i$ is the ramification index of $P_i$.  We call $\prod_{i=1}^j \zeta^\mathbf{D}_{\mathbb{Q}(\sqrt{d}),P_i}$ the $p$-local Dedekind zeta function.
It is easy to see that the Euler product of the $p$-local representation zeta functions of $\Hz$ can be refined to an Euler product of $P_i$-local Dedekind zeta functions by factorization; that is,
 \begin{equation}
 \zeta_{\Hz,p}(s)=\prod_{i=1}^j \dfrac{\zeta^\mathbf{D}_{\mathbb{Q}(\sqrt{d}),P_i}(s-1)}{\zeta^\mathbf{D}_{\mathbb{Q}(\sqrt{d}),P_i}(s)}.
 \end{equation}
%! The stated functional equation in Theorem \ref{voll} is obtained by a short computation of the functional equations the $P$-local Dedekind zeta functions satisfy. We note that $\zeta_{\Hz,P}(s)$, for all non-zero prime ideals $P$, satisfy the functional equation in \cite[Corollary 1.3]{SV}.
The functional equation in \cite[Corollary 1.3]{SV} can be obtained by a short computation of the functional equations satisfied by the $P$-local Dedekind zeta functions.

The structure of the paper is as follows. Section 2
%! of this paper
introduces some definitions and notation. Section 3 contains the proof of the main theorem which is separated into three parts: studying eigenspace behaviour, picking a suitable basis, and counting the number of twist-isoclasses, respectively.
\section{Preliminaries}

We remind the reader of some standard concepts in algebraic number theory. For more details see, for example, \cite[Section 10.5]{Cohen}.
\begin{definition} Let $K$ be an algebraic number field and $O_K$ its ring of integers. Then
$\zeta^{\mathbf{D}}_K (s) = \sum_{I \subseteq O_K} (N_{K/\mathbb{Q}} (I))^{-s}$ is the \emph{Dedekind zeta function} of $K$ where $I$ runs through the non-zero ideals of $O_K$ and $N_{K/\mathbb{Q}} (I)$ is the norm of $I$ with respect to $\mathbb{Q}$.
\end{definition}

The zeta function $\zeta^{\mathbf{D}}_K (s)$ has an Euler product decomposition

$$\zeta^{\mathbf{D}}_K (s) = \prod_{P \subseteq O_K} \frac{1}{1 - (N_{K/\mathbb{Q}}(P))^{-s}},$$
\noindent where $P$ runs over all non-zero prime ideals of $O_K$. This decomposition reflects the unique factorization of ideals of $O_K.$

We recall another standard definition \cite[Section 3.3]{Cohen}.

\begin{definition}
Let $p$ be a rational prime, and consider the ideal $(p) \lhd O_d$. If $(p)$ is prime then $p$ is \emph{inert.} If $(p)$ is the product of two distinct prime ideals then $p$ \emph{splits.} If $(p)$ is the square of a prime ideal then $p$ is \emph{ramified}. \end{definition}

This definition is equivalent to the equation $x^2- \Delta \equiv 0 \ (\modd p)$ having $0,2,$ or $1$ solution, respectively and where $\Delta$ is the discriminant of $\mathbb{Q}(\sqrt{d}).$ We note that $\Delta=4d$ if $d \equiv 2,3 \ (\modd 4)$ and $\Delta = d$ if $d \equiv 1 \ (\modd 4).$ We also note that there are only a finite number of ramified primes and a prime $p$ is ramified if and only if it divides the discriminant of the number field.

The following is a well known result.

\begin{proposition} \label{oror}
Let $p$ be a prime and $\mathbb{Q}(\dd)$ be a quadratic number field. Then the $p$-local Dedekind zeta function of $\mathbb{Q}(\dd)$ is

\begin{equation}
\zeta^{\mathbf{D}}_{\mathbb{Q}(\dd),p}(s) =
\begin{cases}

\dfrac{1}{1-p^{-2s}} &\text{if $p$ is inert},\\
\\
 \left(\dfrac{1}{1-p^{-s}}\right)^2 &\text{if  $p$ splits},\\

\\
\dfrac{1}{1-p^{-s}} &\text{if $p$ is ramified}. \\
\\

\end{cases}
\end{equation}

\end{proposition}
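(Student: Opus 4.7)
The plan is to expand the Euler product for $\zeta^{\mathbf{D}}_{\mathbb{Q}(\dd)}(s)$ and isolate the finitely many factors coming from prime ideals $P \lhd O_d$ lying above the rational prime $p$. By definition
\begin{equation*}
\zeta^{\mathbf{D}}_{\mathbb{Q}(\dd),p}(s) = \prod_{P \mid (p)} \frac{1}{1 - (N_{\mathbb{Q}(\dd)/\mathbb{Q}}(P))^{-s}},
\end{equation*}
so the task reduces to identifying, in each of the three splitting types, which prime ideals $P$ divide $(p)$ and computing their norms.

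The main tool is the fundamental identity $\sum_{i=1}^{j} e_i f_i = [\mathbb{Q}(\dd):\mathbb{Q}] = 2$, where $e_i$ is the ramification index of $P_i$ over $p$ and $f_i$ is its residue degree, together with the fact that $N_{\mathbb{Q}(\dd)/\mathbb{Q}}(P_i) = p^{f_i}$. I would then work through the three cases dictated by the definition already recorded in the excerpt. If $p$ is inert, there is a single prime $P=(p)$ with $e=1$, $f=2$, so $N(P)=p^{2}$ and the local factor is $1/(1-p^{-2s})$. If $p$ splits, $(p)=P_1 P_2$ with $e_i=f_i=1$, giving two factors each equal to $1/(1-p^{-s})$. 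If $p$ is ramified, $(p)=P^2$ with $e=2$, $f=1$, so $N(P)=p$ and the factor is $1/(1-p^{-s})$.

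The only remaining verifications are that $N((p))=p^{2}$ in the inert case (which follows from $N$ being multiplicative together with $N((p))=p^{[\mathbb{Q}(\dd):\mathbb{Q}]}$) and that the three cases enumerated in the statement exhaust all possibilities; but the latter is precisely the content of the definition of inert/split/ramified recalled immediately before the proposition, combined with $\sum e_i f_i = 2$, which forces $(j,e,f) \in \{(1,1,2),(2,1,1),(1,2,1)\}$. There is no real obstacle here: the proposition is essentially a bookkeeping consequence of the Euler product together with the classification of prime splitting in quadratic fields, and the only care needed is to correctly track the residue degree in the inert case so that $p^{-s}$ is replaced by $p^{-2s}$.
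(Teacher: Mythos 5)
Your argument is correct and is the standard derivation: the paper itself gives no proof, merely remarking that the result is well known, and it has already recorded the Euler product $\zeta^{\mathbf{D}}_K(s)=\prod_P \bigl(1-N_{K/\mathbb{Q}}(P)^{-s}\bigr)^{-1}$ from which your computation proceeds. Identifying the factors over a given rational prime $p$ via the splitting-type/norm data $(j,e,f)\in\{(1,1,2),(2,1,1),(1,2,1)\}$, which follows from $\sum e_i f_i = 2$, is exactly the bookkeeping the paper leaves implicit, so your proof fills that gap correctly.
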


To aid our study of the representation zeta function of $\Hz$ we choose an appropriate presentation. It is easily seen that the following six matrices generate $\Hz$:

\begin{align*}
x= \left( \begin{array}{ccc}
1 & 1 & 0 \\
0 & 1 & 0 \\
0 & 0 & 1
\end{array} \right)  \qquad & x_d= \left( \begin{array}{ccc}
1 & D & 0 \\
0 & 1 & 0 \\
0 & 0 & 1
\end{array}\right) \\
\\
y= \left( \begin{array}{ccc}
1 & 0 & 0 \\
0 & 1 & 1 \\
0 & 0 & 1
\end{array} \right)  \qquad & y_d= \left( \begin{array}{ccc}
1 & 0 & 0 \\
0 & 1 & D \\
0 & 0 & 1
\end{array}\right) \\
\\
z= \left( \begin{array}{ccc}
1 & 0 & 1 \\
0 & 1 & 0 \\
0 & 0 & 1
\end{array} \right)  \qquad & z_d= \left( \begin{array}{ccc}
1 & 0 & D \\
0 & 1 & 0 \\
0 & 0 & 1
\end{array}\right) \\
\end{align*}
where $D = \sqrt{d}$ if $d \equiv 2,3 \ (\modd 4)$ and $D = \frac{1+\sqrt{d}}{2}$ if $d \equiv 1 \ (\modd 4)$. Note that $(1,D)$ is a $\mathbb{Z}$-basis for $O_d.$\\

 It can be shown that a presentation for this group is given by $$\langle x,x_d,y,y_d,z,z_d \mid [x,y]=z, [x,y_d] =[x_d,y] =z_d, [x_d,y_d]=z^{d} \rangle$$  if  $d \equiv 2,3 \ (\modd 4)$ and $$\langle x,x_d,y,y_d,z,z_d \mid [x,y]=z, [x,y_d] =[x_d,y] =z_d,[x_d,y_d] =z^{\frac{d-1}{4}}z_d \rangle$$ if $d \equiv 1 \ (\modd 4)$. By convention, commutators that cannot be deduced from the relations that appear are trivial.\\

%Then $p$ is \textbf{type I exceptional} if $d = 3 \modd 4$ and $p=2$ or $d = 0 \modd p$ and $d= 1 \modd 4$,  $p$ is \textbf{type II exceptional} if $d = 0 \modd p$ and $d= 2,3 \modd 4$, and $p$ is \textbf{type III exceptional} if $d=1 \modd 4$ and $k=p^jq$ where $q$ is coprime to $p$ and $d = 4k+1$. The set of all of  these primes are the \textbf{exceptional primes}.

%\begin{definition}
%Let $p$ be a prime that is not exceptional. .
%\end{definition}

Before we prove the main result of the paper, we tabulate the notation used herein for easy reference:\\

\begin{tabular}{l|l}
$\Hz$ & the Heisenberg group over the integers of $\mathbb{Q}(\dd)$\\
$d$ & {a square-free integer}\\
$D_l$ & $d$ if $d \equiv 2,3 \ (\modd 4)$; $l+\frac{d-1}{4}$ if $d \equiv 1 \ (\modd 4)$\\
$\langle a_1, \ldots, a_k \rangle$ & the group generated by $a_1, \ldots, a_k$ \\
$\phi$ & {the Euler phi function}\\
$(M)_{i,j}$ & the $(i,j)$th entry of matrix $M$\\
$G \cdot x$ & the orbit of $x$ under the action of a group $G$\\
$[a,b]$ & the commutator of group elements $a$ and $b$, that is, $aba^{-1}b^{-1}$\\

\end{tabular}\\

In this paper, matrix entries that are blank are assumed to be $0.$
\section{Proof of Theorem \ref{main}}
%We now state Theorem \ref{main} in detail:
%
%
%\begin{theorem} For each prime $p$, the $p$-local representation zeta functions \ $\zeta_{\Hz,p}(s)$ \ of \ $\zeta_{\Hz}(s)$ are
%\vspace{2mm}
%$$
%\zeta^{p}_{\Hz}(s) =
%\begin{cases}
%
%\dfrac{1-p^{-2s}}{1-p^{2-2s}} &\text{if $p$ is inert},\\
%\\
% \left(\dfrac{1-p^{-s}}{1-p^{1-s}}\right)^2 &\text{if  $p$ splits},\\
%
%\\
%\dfrac{1-p^{-s}}{1-p^{1-s}} &\text{if $p$ is ramified}. \\
%\\
%
%
%
%\end{cases}
%$$
%
%\vspace{6mm}
%
%
%
%\end{theorem}

\subsection{Studying Eigenspaces}

We begin the proof of the main theorem by studying the twists of some irreducible representation $\rho : \Hz \to \GL_{p^n}(\mathbb{C}).$

\begin{definition}
A twist-isoclass is of \emph{dimension} $n \in \mathbb{N}$ if the representations in the twist-isoclass are of dimension $n$.
\end{definition}

We note that this is not to be confused with the dimension of a twist-isoclass as a subvariety (see \cite{LM} for details).

It is easy to show that there is only 1 twist-isoclass of dimension 1:
%! . That is
that is, $r_1 = 1$, where $r_1$ is the first coefficient in $\zeta_{\Hz}(s)$.

\begin{lemma} \label{L2}
Let $\rho: \Hz \to \GL_n(\mathbb{C})$ be an irreducible representation and let $J =\{x,y,x_d,y_d\}$. Then there exists a representation $\chi: \Hz \to \GL_1(\mathbb{C})$ such that for each $ j \in J$ we have that $1$ is an eigenvalue of $\chi \otimes \rho(j).$
\end{lemma}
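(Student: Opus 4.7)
The plan is to reduce the statement to the fact that one-dimensional representations of $\Hz$ can be prescribed arbitrarily on the four generators $x,y,x_d,y_d$, together with the observation that every invertible matrix has at least one nonzero eigenvalue.

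First I would analyze the one-dimensional representations of $\Hz$. Any such $\chi$ factors through the abelianization $\Hz/[\Hz,\Hz]$. Reading off the presentation given in Section~2, the derived subgroup is generated by $z$ and $z_d$ (and their conjugates), so in the abelianization the images of $z$ and $z_d$ become trivial and the remaining relations among $x,y,x_d,y_d$ become vacuous. Hence $\Hz^{\mathrm{ab}}\cong \mathbb{Z}^4$ with basis the images of $x,y,x_d,y_d$, and consequently a one-dimensional representation $\chi:\Hz\to \mathrm{GL}_1(\mathbb{C})$ is determined by, and may be defined by, arbitrary choices of nonzero complex scalars $\chi(x),\chi(y),\chi(x_d),\chi(y_d)\in\mathbb{C}^\times$.

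Next, for each $j\in J=\{x,y,x_d,y_d\}$ the operator $\rho(j)\in \mathrm{GL}_n(\mathbb{C})$ is invertible and therefore has a nonzero eigenvalue $\lambda_j\in\mathbb{C}^\times$. Define $\chi$ to be the unique one-dimensional representation of $\Hz$ with
\[
\chi(x)=\lambda_x^{-1},\quad \chi(y)=\lambda_y^{-1},\quad \chi(x_d)=\lambda_{x_d}^{-1},\quad \chi(y_d)=\lambda_{y_d}^{-1}.
\]
Since $\chi$ is one-dimensional, the twist $\chi\otimes \rho$ sends each $j\in J$ to the matrix $\chi(j)\rho(j)=\lambda_j^{-1}\rho(j)$, which has eigenvalue $\lambda_j^{-1}\cdot \lambda_j=1$, as required.

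There is no serious obstacle here; the one subtle point to verify carefully is the structure of the abelianization, namely that no hidden relation among $x,y,x_d,y_d$ persists after killing $z$ and $z_d$. This follows because every relator in the presentation (both in the $d\equiv 2,3\ (\modd 4)$ case and in the $d\equiv 1\ (\modd 4)$ case) is a commutator equated to a word in $\{z,z_d\}$, and so becomes trivial modulo the derived subgroup. With this in hand the lemma reduces to the trivial remark that we have four independent parameters to match against four eigenvalue conditions.
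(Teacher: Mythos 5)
Your proof is correct and follows the same route as the paper: pick an eigenvalue $\lambda_j$ of each $\rho(j)$ and twist by the $1$-dimensional representation sending $j\mapsto\lambda_j^{-1}$. The only difference is that you explicitly verify that such a $\chi$ exists by computing $\Hz^{\mathrm{ab}}\cong\mathbb{Z}^4$, a detail the paper leaves implicit.
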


\begin{proof}

Let $\rho: \Hz \to \GL_n(\mathbb{C})$ be an irreducible representation and for each $j \in J$ let $\lambda_j$ be an eigenvalue of $\rho(j)$. We can twist any irreducible representation by any 1-dimensional representation and remain in the same twist-isoclass. We deduce that we can choose a 1-dimensional representation $\chi$ such that $\chi(j)=(\lambda_j)^{-1}$.
\end{proof}

We call a representation \emph{good} if $1$ is an eigenvalue of all of the non-central images of the generators.

We will show that the images of the generators of any irreducible representation of $\Hz$, up to twisting, can be written as matrices in a certain canonical form and that any set of matrices satisfying this form is, in fact, isomorphic to the images of the generators for some irreducible representation of $\Hz$. Finally, if two irreducible representations are not twist-equivalent, their associated canonical forms differ.

Let $p$ be a prime, $n \geq 1$, and $\rho: \Hz \to  \GL_{p^n}(\mathbb{C})$ be a good irreducible representation. Let

\begin{align*}
 A &:= \rho(x) &A_d &:= \rho(x_d) \\
 B &:= \rho(y)& B_d &:= \rho(y_d) \\
 \Lambda &:= \rho(z)&  \Lambda_d &:= \rho(z_d).
\end{align*}

\noindent Our aim in the first two sections is to choose a basis for $\mathbb{C}^{p^n}$ such that the images of our generators in $\GL_{p^n}(\mathbb{C})$ are in a ``nice" form. This basis will be chosen so that $A$ and $A_d$ are diagonal matrices, $B$ and $B_d$ are block permutation matrices, and $\Lambda$ and $\Lambda_d$ are scalar matrices. To avoid extra notation, for the rest of the paper we do not distinguish between a linear operator and its matrix with respect to some basis. Also, we identify scalars with scalar matrices; in particular,  for $\lambda$ a root of unity, we will call the matrix $\lambda I$ a root of unity as well.

\par
Since $z$ and $z_d$ are central in $\Hz$, by Schur's lemma we must have that $\Lambda$ and $\Lambda_d$ are homotheties. By \cite[Theorem 6.6]{LM}, $\rho$, up to twisting, factors through a finite quotient of $\Hz$, say $\overline{\Hz}$. Therefore, without loss of generality, the representation $\rho$ is such that the images of elements of $\Hz$ under $\rho$ must have finite order. Hence, for every $g \in \overline{\Hz}$ we have $ g^k=e$ for some minimal $k$. It then follows that $\rho(g)^k=I$ and the minimum polynomial of $\rho(g)$ is $x^k-1$. Since the $k$th roots of unity are distinct, this polynomial factors over the complex numbers into $k$ distinct linear factors. Thus $A,A_d,B,$ and $B_d$ must be diagonalizable and have eigenvalues which are roots of unity, and $\Lambda$ and $\Lambda_d$ must be roots of unity. It is important to note that twisting does not affect the diagonalizability of the images of the generators; twisting is simply multiplication by scalars. Also, since $[A,A_d] = I$, $A$ and $A_d$ are simultaneously diagonalizable.

\begin{definition}
Let $X$, $Y \in \GL_{p^n}(\mathbb{C})$. If $[X,Y]=Z$ for some homothety $Z \in \GL_{p^n}(\mathbb{C})$ then we say $X$ is \emph{$Z$-arrangeable}, or simply \emph{arrangeable}, under $Y$.
\end{definition}

Denote by $E_{X,\lambda}$ the eigenspace of the linear operator $X$ with eigenvalue $\lambda$. If there will be no confusion we may omit $X.$

\begin{lemma} \label{L3}  Let $X$ be $Z$-arrangeable under $Y$. Then $YE_{X,\lambda}=E_{X,Z\lambda}$.
\end{lemma}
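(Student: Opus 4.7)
The plan is to unwind the arrangeability relation $[X,Y]=Z$ as the identity $XY=ZYX$ (valid because $Z$ is a scalar and hence central), and then apply both sides to an eigenvector. This gives one inclusion directly; the reverse inclusion follows from invertibility.

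More concretely, first I would take $v \in E_{X,\lambda}$, so $Xv=\lambda v$. Applying $XY=ZYX$ to $v$, I would compute
\begin{equation*}
X(Yv) \;=\; XY v \;=\; ZYXv \;=\; ZY(\lambda v) \;=\; (Z\lambda)(Yv),
\end{equation*}
where the third equality uses the fact that $Z$, being a homothety, commutes with every matrix and can be pulled through $Y$. This shows $Yv \in E_{X,Z\lambda}$, hence $YE_{X,\lambda}\subseteq E_{X,Z\lambda}$.

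For the reverse containment, I would take $w\in E_{X,Z\lambda}$ and set $v:=Y^{-1}w$; then an analogous computation $ZYXv = XYv = Xw = Z\lambda w = Z\lambda Yv$, combined with the invertibility of $Z$ (as a nonzero scalar) and of $Y$, yields $Xv=\lambda v$, i.e.\ $w = Yv \in YE_{X,\lambda}$. Combining the two inclusions gives the asserted equality $YE_{X,\lambda}=E_{X,Z\lambda}$.

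There is no real obstacle here: the only subtlety is remembering that $Z$ being a homothety (a scalar matrix) is exactly what allows one to move it freely past $Y$ in the identity $XY=ZYX$; without centrality of $Z$ the argument would not work. I expect the lemma to be stated mainly as a book-keeping tool so that, in subsequent lemmas, $Y$ can be used to permute the $\lambda$-eigenspaces of $X$ cyclically according to the scalar $Z$, which is presumably what motivates the ``block permutation'' shape claimed for the images of $y$ and $y_d$ in the canonical form.
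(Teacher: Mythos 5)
Your proof is correct and takes essentially the same approach as the paper: unwind $[X,Y]=Z$ to $XY=ZYX$ (using that the homothety $Z$ is central), apply to an eigenvector to get one inclusion, and obtain the reverse inclusion by the analogous argument. The paper simply compresses the reverse direction to ``The converse is similar'' where you spell it out via $Y^{-1}$.
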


\begin{proof}
 Let $\mathbf{v}$ be an eigenvector of $X$ such that $X\mathbf{v} = \lambda\mathbf{v}$. Then
$$XY\mathbf{v} = ZYX\mathbf{v}= Z Y \lambda \mathbf{v} = Z \lambda Y \mathbf{v}.$$

Therefore $Y\mathbf{v}$ is also an eigenvector of $X$ and $YE_{X,\lambda}=E_{X,Z\lambda}.$ The converse is similar.

\end{proof}

\begin{lemma} \label{eigenspacesize}
 There are $p^r$ eigenspaces of $A$ each of dimension $p^m$ for some $r,m$ where $r+m=n$.
\end{lemma}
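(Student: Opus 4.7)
The plan is to show two things: (i) all eigenspaces of $A$ have the same dimension, using irreducibility of $\rho$; and (ii) their common dimension and their number are forced to be powers of $p$ by unique factorization, since their product is $p^n$.

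For (i), I would use Lemma \ref{L3} as the main tool. Since $[A,B]=\Lambda$ and $\Lambda$ is a scalar (a root of unity), $A$ is $\Lambda$-arrangeable under $B$, so by Lemma \ref{L3} we have $BE_{A,\lambda}=E_{A,\Lambda\lambda}$ for every eigenvalue $\lambda$ of $A$. Likewise, since $[A,B_d]=\Lambda_d$ is a scalar (this follows from the presentation, as $[x,y_d]=z_d$), we get $B_dE_{A,\lambda}=E_{A,\Lambda_d\lambda}$. Thus $B$ and $B_d$ permute the eigenspaces of $A$ and each restricts to a linear isomorphism on any orbit. On the other hand $A_d$ commutes with $A$ and hence preserves each $E_{A,\lambda}$. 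It follows that for any orbit $\mathcal{O}$ of the action of $\langle \Lambda,\Lambda_d\rangle$ on the set of eigenvalues of $A$, the subspace $W_{\mathcal{O}}:=\bigoplus_{\lambda\in\mathcal{O}}E_{A,\lambda}$ is invariant under $A$, $A_d$, $B$, $B_d$, and the central scalars $\Lambda$, $\Lambda_d$, i.e.\ under all of $\rho(\Hz)$. Irreducibility forces $W_{\mathcal{O}}$ to be either $0$ or the whole space, so the action of $\langle \Lambda,\Lambda_d\rangle$ on the eigenvalues of $A$ is transitive. Transitivity, together with the fact that $B$ (say) provides linear isomorphisms between eigenspaces in the same orbit, yields that every eigenspace of $A$ has the same dimension.

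For (ii), once I know that $A$ has $N$ eigenspaces each of dimension $D$, I have $ND=\dim\mathbb{C}^{p^n}=p^n$. Since $p$ is prime, unique factorization in $\mathbb{Z}$ forces $N=p^r$ and $D=p^m$ for nonnegative integers $r,m$ with $r+m=n$, which is exactly the claim.

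The main (minor) obstacle is verifying transitivity of the $\langle \Lambda,\Lambda_d\rangle$-action on $A$-eigenvalues; everything else is immediate from Lemma \ref{L3}, the fact that $\Lambda,\Lambda_d$ are scalars (forced by Schur's lemma, as noted in the preamble), and unique factorization. In writing up the transitivity step the one point to be careful about is to confirm that $W_{\mathcal{O}}$ is stable under every generator of $\Hz$, which is why I separately note that $A_d$ preserves each $A$-eigenspace (since $[A,A_d]=I$) and that scalars trivially preserve $W_{\mathcal{O}}$.
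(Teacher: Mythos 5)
Your proposal is correct and follows essentially the same route as the paper: both rest on Lemma \ref{L3} to show that $B$ and $B_d$ permute the eigenspaces of $A$ by linear isomorphisms, invoke irreducibility of $\rho$ to force all eigenspaces to have equal dimension, and finish by observing that the number of eigenspaces times their common dimension equals $p^n$, so both must be powers of $p$. The only cosmetic difference is the stable subspace fed into irreducibility---the paper sums all eigenspaces of a fixed dimension, while you sum over a $\langle\Lambda,\Lambda_d\rangle$-orbit of eigenvalues and deduce transitivity first---but the two arguments amount to the same thing.
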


\begin{proof}
 Since $[A,B] = \Lambda$ and $[A,B_d]=\Lambda_d$ we have that $A$ is arrangeable under $B$ and $B_d$. By Lemma \ref{L3}, $B$ and $B_d$ must send an eigenspace of $A$ of some dimension, say $\gamma$, to another eigenspace of dimension $\gamma$. Therefore the direct sum of all eigenspaces of dimension $\gamma$ forms a stable subspace of $\rho$. Since $\rho$ is irreducible by assumption, all eigenspaces of $A$ must be of dimension $\gamma$. Lemma \ref{L3}, along with the assumption that $\rho$ is good, lets us additionally conclude that the eigenvalues of $A$ are powers of $\Lambda$.

 Since $A$ is diagonalizable,  $A$ has ${\omega}$ distinct eigenvalues of multiplicity ${\gamma}$ for some $\omega$. Since $p^n = \omega \gamma$ this shows us that $\omega$ and $\gamma$ must be powers of $p$, say $\omega = p^r$ for some $r$ and $\gamma = p^m$ for some $m$.
\end{proof}

We state a simplified version of Lemma \ref{eigenspacesize} without proof.

\begin{lemma} \label{simplified}
Let $X,Y,Z \in \GL_{p^r}(\mathbb{C})$, where $Z$ is a primitive $p^r$th root of unity, such that $[X,Y]=Z$. Then there are $p^r$ distinct 1-dimensional eigenspaces of $X$.
\end{lemma}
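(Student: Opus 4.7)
The plan is to adapt the orbit argument from Lemma \ref{eigenspacesize} to the critical case where the scalar commutator $Z$ attains its maximal possible order. Since $[X,Y] = Z$ is a scalar matrix $\zeta I$ with $\zeta$ a primitive $p^r$-th root of unity, the hypothesis that $X$ is $Z$-arrangeable under $Y$ holds by definition, and Lemma \ref{L3} immediately yields $Y E_{X,\lambda} = E_{X,\zeta\lambda}$ for any eigenvalue $\lambda$ of $X$.

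I would then fix any eigenvalue $\lambda_0$ of $X$; such a $\lambda_0$ exists because every complex matrix has at least one eigenvalue. Applying the identity from Lemma \ref{L3} repeatedly shows that $\zeta^i \lambda_0$ is an eigenvalue of $X$ for every integer $i \geq 0$, since $Y$ is invertible and therefore carries the non-trivial eigenspace $E_{X,\zeta^i \lambda_0}$ onto $E_{X,\zeta^{i+1}\lambda_0}$. Because $\zeta$ has order exactly $p^r$, the values $\zeta^0 \lambda_0, \zeta^1 \lambda_0, \ldots, \zeta^{p^r-1}\lambda_0$ are pairwise distinct, so $X$ has at least $p^r$ distinct eigenvalues.

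The conclusion would then be immediate: since $X$ is a $p^r \times p^r$ matrix, it has at most $p^r$ distinct eigenvalues, so the bound is saturated and each eigenspace contributes exactly dimension one. In particular, $X$ is automatically diagonalizable, so no such hypothesis is needed as input. I do not expect any real obstacle beyond what is already present in Lemma \ref{eigenspacesize}: the only simplification is that with each eigenspace of $X$ forced to have dimension $1$, no pigeonhole on common eigenspace dimensions is required, and the $\zeta$-orbit of a single eigenvalue fully exhausts the spectrum.
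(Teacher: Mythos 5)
Your proof is correct, and it is worth noting that the paper itself supplies no proof for this lemma: it simply remarks that it is ``a simplified version of Lemma \ref{eigenspacesize}'' and leaves it at that. Your argument fills the gap in the cleanest possible way. The key move is the same orbit step as in Lemma \ref{eigenspacesize} (via Lemma \ref{L3}), but you exploit the fact that $Z$ has \emph{maximal} order $p^r$: the $\zeta$-orbit of a single eigenvalue already has size $p^r$, and since a $p^r \times p^r$ matrix has at most $p^r$ distinct eigenvalues (the characteristic polynomial has degree $p^r$), the algebraic multiplicities are all forced to be $1$ and hence so are the geometric multiplicities. This neatly sidesteps the irreducibility argument that Lemma \ref{eigenspacesize} needs to equalize eigenspace dimensions --- an argument that would in any case be unavailable here, since the statement of this lemma makes no reference to a representation or to irreducibility. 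Your observation that diagonalizability of $X$ follows automatically (from having $p^r$ distinct eigenvalues) rather than being a hidden hypothesis is also correct and shows that the lemma as stated is literally self-contained. In short: your proof is right, and it is the natural reading of what the author meant by the ``simplified version.''
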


Let $E_\lambda := E_{A, \lambda}$ and let $E:=\{E_\lambda \mid \lambda \text{ is an eigenvalue of } A\}$.

\begin{lemma} \label{primitiveroot}
%! Both $\Lambda$ or $\Lambda_d$ must be ${p^r}$th root  of unity, and one of these must be a primitive root.
Both $\Lambda$ and $\Lambda_d$ must be ${p^r}$th roots of unity, and at least one must be a primitive root.
\end{lemma}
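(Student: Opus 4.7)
My plan is to combine Lemma \ref{L3}, the goodness of $\rho$, and irreducibility to identify the set of eigenvalues of $A$ with the cyclic subgroup $H := \langle \Lambda, \Lambda_d \rangle \leq \mathbb{C}^\times$, and then argue about the order of $H$.

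First, since $[A,B]=\Lambda$ and $[A,B_d]=\Lambda_d$, Lemma \ref{L3} gives $B E_\lambda = E_{\Lambda \lambda}$ and $B_d E_\lambda = E_{\Lambda_d \lambda}$ for every eigenvalue $\lambda$ of $A$. Because $\rho$ is good, $1$ is an eigenvalue of $A$, so applying words in $B$ and $B_d$ shows that every element of $H$ is an eigenvalue of $A$, and conversely every eigenvalue reachable from $1$ lies in $H$. Since the images of the generators all have finite order, $\Lambda$ and $\Lambda_d$ are roots of unity, so $H$ is a finite subgroup of $\mathbb{C}^\times$, hence cyclic.

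Next I would use irreducibility to show that every eigenvalue of $A$ lies in this $H$-orbit of $1$. The key observation is that if $T$ is any union of eigenspaces of $A$ that is stable under $B$ and $B_d$, then $T$ is automatically stable under $A$ (it is a direct sum of $A$-eigenspaces), under $A_d$ (since $[A,A_d]=I$ so $A_d$ preserves each $A$-eigenspace), and under the central elements $\Lambda, \Lambda_d$; so $T$ is $\rho$-stable. Consequently the sum of eigenspaces corresponding to any $\langle \Lambda, \Lambda_d\rangle$-orbit of eigenvalues is $\rho$-stable, and by irreducibility there is only one such orbit, namely $H \cdot 1 = H$. Combining with Lemma \ref{eigenspacesize}, the set of eigenvalues of $A$ is exactly $H$ and has cardinality $p^r$.

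Therefore $H$ is the cyclic group of order $p^r$, i.e., the group of $p^r$th roots of unity, and both $\Lambda, \Lambda_d \in H$ are $p^r$th roots of unity. Finally, since $H$ has prime-power order $p^r$ and is generated by the two elements $\Lambda$ and $\Lambda_d$, at least one of them must have order exactly $p^r$: otherwise both would have orders dividing $p^{r-1}$, forcing $H$ into the subgroup of $p^{r-1}$th roots of unity, contradicting $|H|=p^r$. The main subtlety is the irreducibility step — one must be careful that the candidate invariant subspace built from a partial orbit is indeed $\rho$-invariant, which is where the relation $[A,A_d]=I$ and the centrality of $\Lambda, \Lambda_d$ do the real work.
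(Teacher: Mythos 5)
Your proposal is correct and follows essentially the same route as the paper: identify the eigenvalue set of $A$ with the $\langle B,B_d\rangle$-orbit of $E_1$ via Lemma~\ref{L3}, invoke irreducibility to see this orbit exhausts all $p^r$ eigenspaces, and then read off that $\langle \Lambda,\Lambda_d\rangle$ is cyclic of order $p^r$ so at least one generator must have full order. You simply spell out more explicitly than the paper does why the orbit of $E_1$ under $\langle B,B_d\rangle$ yields a $\rho$-stable subspace (using $[A,A_d]=I$ and centrality of $\Lambda,\Lambda_d$), and why two generators of a cyclic $p$-group cannot both be non-generating; these are exactly the steps the paper leaves implicit.
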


\begin{proof}
Let $E_1$ be the eigenspace of $A$ with eigenvalue $1.$ By Lemma \ref{L3} and the group relations of $\Hz$ we have that $ B \cdot E_1 = E_\Lambda$ and $B_d  \cdot E_1 = E_{\Lambda_d}$ with the other generators acting trivially on $E_1.$ Since by Lemma \ref{eigenspacesize} $A$ has $p^r$ distinct eigenspaces, we have that $\mid \langle B \rangle \cdot E_1 \mid = p^{k}$ and $\mid \langle B_d \rangle \cdot E_1 \mid = p^{k_d}$ for some $k,k_d \leq r.$ Thus $\Lambda$ is a primitive $p^k$th root of unity and $\Lambda_d$ is a primitive $p^{k_d}$th root of unity. Finally, since there are exactly $p^r$ distinct eigenspaces of $A$,
%! it must be that at least one of $k,k_d=r.$
at least one of $k,k_d$ must equal $r$.

  %Since $|E|=p^r$,  $[A,B] = \Lambda$, and $[A,B_d] = \Lambda_d$, it is clear from Lemmas \ref{L3} and \ref{simplified} that  $\Lambda$ and $\Lambda_d$ are $p^r$th roots of unity.

 %Since $\rho$ is irreducible, $E$ is permuted transitively by the images of the generators of $\Hz$. Since $\Lambda$, $\Lambda_d$, $A$, and $A_d$ commute and are simultaneously diagonalizable they act trivially on $E$. Therefore $E$ must be permuted transitively by $\langle B, B_d \rangle$. The operator $B$ sends the eigenspace $E_\lambda$ of $A$ to $E_{\Lambda \lambda}$ and $B_d$ sends the eigenspace $E_\lambda$ of $A$ to $E_{\Lambda_d \lambda}$, and since $1$ is an eigenvalue of $A$, any eigenvalue of $A$ must be a $p^r$th root of unity. But since we chose $A$ to have $p^r$ distinct eigenvalues, $A$ must have all $p^r$th roots of unity as eigenvalues. We know that $\Lambda$ and $\Lambda_d$ generate the group of all $p^r$th roots of unity. Therefore at least one of them must be primitive.

\end{proof}

The proof of Lemma \ref{primitiveroot} shows that, in fact, at least one of $B$ and $B_d$ permutes the eigenspaces of $A$ transitively, depending on whether $\Lambda$ or $\Lambda_d$ are $p^r$th primitive roots of unity. To deal with both cases, we break the argument into two separate parts.   %since ${p^r}$ is the minimal power for one of $B^{p^r} E_\lambda = E_\lambda$ or $B_d^{p^r} E_\lambda = E_\lambda.$ and this choice depends on which of $\Lambda$ or $\Lambda_d$ is a ${p^r}$th root of unity.
\medskip

\noindent \textbf{Case 1}: $\Lambda$ is a primitive $p^r$th root of unity.\\

Since $\Lambda$ is primitive we can deduce that $E = \{ E_1, E_\Lambda, E_{\Lambda^2}, \ldots, E_{\Lambda^{p^r-1}}\}$. We know that $B$ permutes $E$ transitively and since $\Lambda$ is primitive, $B$ must correspond to a  $p^r$-cycle permutation, say $\sigma_B$. This, together with the fact that $B$ and $B_d$ commute and  that in $S_n$ only powers of $n$-cycles commute with $n$-cycles, allows us to deduce that the permutation of $E$ corresponding to $B_d$, say $\sigma_{B_d}$, is $\sigma_B^l$ for some $l$. Therefore we have that
%\begin{lemma} Let $\sigma \in S_n$ be an $n$-cycle and $\mu \in S_n$ be any other permutation. If $\sigma$ and $\mu$ commute then $\mu = \sigma^j$ for some $j$.
%\end{lemma}
%The proof of the preceding lemma is left as an exercise.
\begin{equation} \label{lambdal}
\Lambda_d=\Lambda^l
\end{equation}
since $B_d$ sends $E_\lambda$ to $E_{\Lambda^l \lambda}$ and $[A,B_d] = \Lambda_d$. Define $D_l$ as $d$ if $d \equiv 2,3 \ (\modd 4)$ and $\frac{d-1}{4} +l$ if $d \equiv 1 \ (\modd 4)$. Then we can say that $[A_d,B_d]= \Lambda^{D_l}$.

\begin{lemma}\label{l3.5}
The eigenvalues of $B$ and $B_d$ are $p^r$th roots of unity. Moreover, $B$ has all $p^r$th roots of unity as eigenvalues.
\end{lemma}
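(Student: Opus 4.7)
The plan is to first pin down the order of $B$ (and of $B_d$) and then read off the eigenvalues from the cyclic action on eigenspaces of $A$.

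First I would compute $[A,B^k]$. Since $AB=\Lambda BA$ and $\Lambda$ is central (scalar), an easy induction gives $AB^k=\Lambda^k B^k A$; in other words $[A,B^k]=\Lambda^k$. Because $\Lambda$ is a primitive $p^r$th root of unity, this yields $[A,B^{p^r}]=I$. The same calculation, together with Equation \eqref{lambdal} and the identity $[A_d,B]=\Lambda_d=\Lambda^l$, gives $[A_d,B^{p^r}]=I$. Since $B^{p^r}$ also commutes with $B$ and $B_d$ (and with the scalars $\Lambda,\Lambda_d$), it centralises the entire image of $\rho$. Irreducibility of $\rho$ together with Schur's lemma forces $B^{p^r}$ to be a scalar matrix $\mu I$. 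Applying this equality to any eigenvector of $B$ with eigenvalue $1$ (which exists by the assumption that $\rho$ is good) yields $\mu=1$, so $B^{p^r}=I$ and every eigenvalue of $B$ is a $p^r$th root of unity. The very same argument, using $[A,B_d^{p^r}]=\Lambda^{l p^r}=I$ and $[A_d,B_d^{p^r}]=\Lambda^{D_l p^r}=I$, handles $B_d$.

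To see that all $p^r$th roots of unity actually occur as eigenvalues of $B$, I would exhibit an explicit $\langle B\rangle$-invariant subspace on which $B$ acts as a cyclic permutation of order $p^r$. Pick any non-zero $v\in E_1$. By Lemma \ref{L3}, $B^i v\in E_{\Lambda^i}$ for $i=0,1,\dots,p^r-1$, and since the eigenspaces $E_{\Lambda^i}$ are distinct (Case~1), the vectors $v,Bv,\dots,B^{p^r-1}v$ are linearly independent. Their span $W$ is a $\langle B\rangle$-stable subspace of dimension $p^r$ on which $B$ acts as a $p^r$-cycle on this basis; the characteristic polynomial of $B|_W$ is therefore $x^{p^r}-1$, whose roots are exactly the $p^r$th roots of unity. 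Consequently each $p^r$th root of unity is an eigenvalue of $B$.

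The only slightly delicate step is verifying that $B^{p^r}$ really is central in the image of $\rho$: it is essential that $\Lambda$ commutes with everything so that the commutator identity $[A,B^k]=\Lambda^k$ holds, and that $\Lambda^{p^r}=1$ so that the power vanishes. The remaining pieces, relying on Schur's lemma and the goodness hypothesis to turn ``scalar of finite order'' into ``identity'', are routine, as is reading off the eigenvalues from the cyclic $p^r$-cycle action on $W$.
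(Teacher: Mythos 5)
Your proof is correct, but it takes a genuinely different route from the paper. For the first assertion (the eigenvalues of $B$ and $B_d$ are $p^r$th roots of unity), the paper does not compute an order for $B$. Instead it observes that $B$ is $\Lambda^{-1}$-arrangeable under $A$ and $\Lambda_d^{-1}$-arrangeable under $A_d$, both of which are $p^r$th roots of unity, and then argues (as in Lemma \ref{primitiveroot}) that the span of the $\langle A,A_d\rangle$-orbit of $E_{B,1}$ is a non-zero stable subspace, hence the whole space by irreducibility; since every eigenvalue that appears along this orbit is a $p^r$th root of unity, one is done, and the same orbit count bounds the number of eigenspaces. You instead push the commutator identity to get $[A,B^{p^r}]=[A_d,B^{p^r}]=I$, conclude from Schur's lemma that $B^{p^r}$ is scalar, and pin that scalar to $1$ using the goodness hypothesis; this is a self-contained, purely algebraic argument that gives the exact order of $B$ without re-running the stable-subspace argument, and arguably makes later steps (for instance, the deduction $P=I_{p^m}$ in the basis-picking subsection) more transparent. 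For the second assertion (all $p^r$th roots occur), both proofs rest on Lemma \ref{L3} together with primitivity of $\Lambda$, but you look at the orbit $v,Bv,\dots,B^{p^r-1}v$ of a vector in $E_{A,1}$ and read the spectrum off the cyclic permutation matrix, whereas the paper applies Lemma \ref{L3} to the eigenspaces $E_{B,\lambda}$ under the action of $A$. These are dual viewpoints; yours has the small advantage of explicitly producing a $B$-cyclic subspace, which dovetails with your $B^{p^r}=I$ computation.
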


\begin{proof}
Since the commutator relation is antisymmetric we have that $[B,A] = \Lambda^{-1}$. Therefore $B$ is $\Lambda^{-1}$-arrangeable under $A$. We recall that $1$ is an eigenvalue of $B$. Since $\Lambda$ is a primitive $p^r$th root of unity, we have, by  Lemma \ref{L3}, that $B$ has at least $p^r$ distinct eigenspaces and all $p^r$ roots of unity are eigenvalues of $B$. However, since $B$ is  $\Lambda_d^{-1}$-arrangeable under $A_d$ and $\Lambda_d$ is a $p^r$th root of unity then, by a similar argument to that of Lemma \ref{primitiveroot}, the $\langle A,A_d \rangle$ orbit of the eigenspace of $B$ with eigenvalue $1$ will be a stable subspace of $\mathbb{C}^{p^n}$. But since $\rho$ is irreducible, we have that this must be the entire space and so this action is transitive. So $B$ has at most $p^r$ distinct eigenspaces. The argument for $B_d$, using the appropriate commutator relations and without the condition that it has all $p^r$th roots of unity as eigenvectors, is similar.\\
\end{proof}

\subsection{Picking a Basis}

We now choose a basis $\TT_1$ for $E_1$ and construct the rest of the basis of $\mathbb{C}^{p^n}$  in the following way. Define  $\TT_{\Lambda^k}:= B^k \cdot \TT_1$ for $1 \leq k \leq p^r-1$  such that $\TT_{\Lambda^k}$ is a basis for $E_{\Lambda^k}.$  Then we have a basis $\TT:=\TT_1 \cup \ldots \cup \TT_{\Lambda^{p^r-1}}$ for $\mathbb{C}^{p^n}$ with respect to which $A$ and $A_d$ are diagonal. Since $\rho$ is good, $1$ is an eigenvalue of $A$ and $A_d$.  We can choose $\TT_1$ such that $(A_{d})_{1,1}=1$,

$$A = \left( \begin{array} {cccc} I_{p^m} &  &  &  \\  & \Lambda I_{p^m} &  &  \\  &  & \ddots &  \\  &  &  & \Lambda^{{p^r}-1}I_{p^m} \end{array} \right),$$

\noindent and

$$ B = \left( \begin{array} {cccc}
0_{p^m} &  &  &  P \\
 I_{p^m} & \ddots &  &  \\
    & \ddots & \ddots &  \\
       &  & I_{p^m} & 0_{p^m} \end{array} \right)$$

\noindent where $I_{p^m}$ and $0_{p^m}$ are, respectively, the identity and null matrices of size $p^m$, and for some matrix $P$ of size $p^m$. However, by Lemma \ref{l3.5}, $B^{p^r}=I$. This implies that $P=I_{p^m}$.

Since $A$ is arrangeable under $B_d$, Lemmas \ref{L3} and \ref{l3.5} imply that $B_d$ must be a generalized block permutation matrix with blocks of size $p^m$; that is it has
%! only one arbitrary non-zero block
exactly one non-zero block in each block row and block column. Since  $[B,B_d] = I$ and $\sigma_{B_d}= \sigma_B^l$  a simple computation shows that $B_d$ is the block matrix\\

  \begin{equation}\label{Bd}
  B_d = \left( \begin{array} {cccccc} 0_{p^m} &  &  &R & & \\  & \ddots &  & & \ddots & \\  &  & \ddots & &  & R\\ R  &  &  & \ddots & &  \\ & \ddots & & & \ddots & \\ & & R & & & 0_{p^m} \end{array} \right)
  \end{equation}\\

%$$ B_d = \left( \begin{array} {cccccc} 0 &  &  &R_1 & & \\  & \ddots &  & & \ddots & \\  &  & \ddots & &  & R_{l-1}\\ R_l  &  &  & \ddots & &  \\ & \ddots & & & \ddots & \\ & & R_{p^r} & & & 0 \end{array} \right).$$\\

\noindent for some matrix $R$ of size $p^m$ with respect to $\TT$, and the $R$ in the first block
%! columns
column is in the $l$th block row.

Since $A_d$ is diagonal let

$$A_d = \left( \begin{array} {cccc} J_1 &  &  &  \\  & J_2 &  &  \\  &  & \ddots &  \\  &  &  & J_{p^r} \end{array} \right)$$

\noindent for some diagonal matrices $J_i, 1 \leq i \leq p^r$ of size $p^m$. Since $[A_d,B]= \Lambda^l$, $A_d$ is $\Lambda^l$-arrangeable under $B$. Therefore by Lemma \ref{L3} and for some matrix $J$ of size $p^m$, we have that

$$A_d = \left( \begin{array} {cccc} J &  &  &  \\  & \Lambda^lJ &  &  \\  &  & \ddots &  \\  &  &  & \Lambda^{(p^r-1)l}J \end{array} \right).$$

\noindent A straightforward calculation using the relation $[A_d,B_d]=\Lambda^{D_l}$ gives us the equation $JR=\Lambda^{l^2-{D_l}}RJ$. Therefore $J$ is $(\Lambda^{l^2-{D_l}})$-arrangeable under $R$.  Let $H$ be the Heisenberg group over the rational integers as defined in Equation \ref{Heisgroup} and note that $\langle J,R \rangle \cong H$. The homomorphism $x \mapsto J, y \mapsto R$ defines a representation of $H.$

Assume that $\mathbf{G}$ is a non-zero $(J,R)$-stable irreducible subspace of $E_1$, say, by Lemma \ref{simplified}, of dimension $p^k$ where $k < m$. It is clear that $\langle A \rangle \cdot S = S$ and since $\mathbf{G}$ is $J$-stable we have that $\langle A_d \rangle \cdot S = S$. It is also clear by Lemma \ref{L3} that $\langle B \rangle \cdot S = \{S, BS, B^2S, \ldots, B^{p^r-1}S\}$ where the $B^iS$ are distinct. Since $\mathbf{G}$ is $R$-stable and $\sigma_{B_d}=\sigma_B^l$ we have that $B_d \cdot S = B^l \cdot S$ and therefore $\langle B_d \rangle \cdot S \subseteq \langle B \rangle \cdot S$. We also note that $\langle A \rangle \cdot B^iS = \Lambda^iS=S$ and $\langle A_d \rangle \cdot B^iS = \Lambda^{li}S=S$ for $0\leq i \leq p^{r}-1.$ Therefore  the size of the orbit $\Hz  \cdot S$ is $p^r$ and the dimension of the subspace spanned by $ \Hz  \cdot S$ is $p^{r +k}$. Since $p^{r+k} <p^{r + m} = p^n$  we have that this is a proper stable subspace of $\rho$. But since $\rho$ is irreducible this is a contradiction.  This implies that the representation of the subgroup generated by $\langle J,R \rangle$ on $E_1$ has no stable subspaces and is therefore irreducible. Since this is a representation of $H$, by Nunley and Magid's result on the irreducible representations of $H$ \cite[Theorem 5]{NM}, we could have chosen $\TT_1$ at the start of this subsection so that

\begin{equation}\label{R} R=\left( \begin{array} {cccc} 0 &  &  & 1  \\ 1 & \ddots & &  \\  & \ddots
 & \ddots &  \\  &  & 1
 & 0 \end{array} \right) \end{equation}

\noindent and therefore

$$J=\left( \begin{array} {cccc} 1 &  &  &   \\  & \Lambda^{l^2-{D_l}} & &  \\  &
 & \ddots &  \\  &  &
 & \Lambda^{(p^{m}-1)(l^2-{D_l})} \end{array} \right).$$

\noindent By \cite[Theorem 5]{NM} we can now deduce that
\begin{equation}\label{p^mth}
\Lambda^{l^2-{D_l}} \text{is a primitive $p^m$th root of unity.}
\end{equation}

\begin{lemma} \label{L5}
$r \geq m$.
\end{lemma}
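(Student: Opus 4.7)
The plan is to exploit the order information already established for $\Lambda$ and for $\Lambda^{l^2 - D_l}$ in the cyclic group of roots of unity, since the desired inequality is really a divisibility statement.

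First I would recall the two facts that have just been proved: in Case 1 we have that $\Lambda$ is a primitive $p^r$-th root of unity (established immediately before Lemma \ref{l3.5}), and equation (\ref{p^mth}) tells us that $\Lambda^{l^2 - D_l}$ is a primitive $p^m$-th root of unity. These two statements give the order of $\Lambda$ and the order of the specific power $\Lambda^{l^2-D_l}$ in $\mathbb{C}^{\times}$.

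Next I would observe that $\Lambda^{l^2-D_l}$ lies in the cyclic group $\langle \Lambda \rangle$, which has order $p^r$. The order of any element of a finite cyclic group divides the order of the group, so the order of $\Lambda^{l^2 - D_l}$ must divide $p^r$. Combining this with the fact that this order equals $p^m$ yields $p^m \mid p^r$, and hence $r \geq m$.

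I do not expect any real obstacle here; the lemma is essentially a divisibility observation packaged from the preceding analysis. The only mild care needed is to invoke the correct result from the case split (we are in Case 1, where $\Lambda$ is the primitive root), and to notice that no appeal to $\Lambda_d$ or to the explicit form of $J$ and $R$ is required beyond what equation (\ref{p^mth}) already records.
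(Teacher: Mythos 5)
Your proof is correct but takes a genuinely different route from the paper's. The paper argues via the matrix structure: computing $B_d^{p^r}$ from the block form in Equation~\ref{Bd} gives a block-diagonal matrix with $R^{p^r}$ on the diagonal, while Lemma~\ref{l3.5} forces $B_d^{p^r}=I$, hence $R^{p^r}=I_{p^m}$; since $R$ is a $p^m$-cycle permutation matrix, this forces $p^m \mid p^r$, i.e.\ $r \geq m$. Your argument instead bypasses the block matrices entirely and reads the inequality directly off Equation~\ref{p^mth}: since $\Lambda^{l^2-D_l}$ is a primitive $p^m$th root of unity and lies in the cyclic group $\langle\Lambda\rangle$ of order $p^r$, its order $p^m$ must divide $p^r$. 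Both are valid; yours is more elementary and shorter, reducing the lemma to a divisibility observation about orders in a finite cyclic group, while the paper's proof stays within the matrix calculus that dominates the rest of Section~3.2 and uses only information about $B_d$ and $R$ rather than about $\Lambda$ and $J$. The one thing to note is that your argument relies on Equation~\ref{p^mth}, which is itself derived via \cite[Theorem 5]{NM} from the structure of $J$ and $R$, so in a sense you are trading the paper's explicit use of $R$'s order for an implicit reliance on the same phenomenon packaged inside that equation.
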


\begin{proof}
By Equation \ref{Bd} we have that

$$B_d^{p^r} = \left( \begin{array} {cccc} R^{p^r} &  &  &  \\   & R^{p^r} &  &  \\  &  & \ddots &  \\  &  &  & R^{p^r} \end{array} \right).$$
But by Lemma \ref{l3.5},
$$B_d^{p^r} = \left( \begin{array} {cccc} I_{p^m} &  &  &  \\   & I_{p^m} &  &  \\  &  & \ddots &  \\  &  &  & I_{p^m} \end{array} \right).$$
Therefore $R^{p^r} = I_{p^m}$ and, since $R$ corresponds to a $p^m$-cycle, $ r \geq m$.

\end{proof}

From Lemma \ref{L5} and  Equation \ref{p^mth} we deduce that
\begin{equation}\label{r-m}
l^2 \equiv {D_l} \ (\modd p^{r-m})
\end{equation}

\noindent and

\begin{equation}\label{r-c}
l^2 \not\equiv {D_l} \ (\mbox{mod} \  p^{r-m+1}) 
\end{equation}

\noindent for $0 \leq l < p^r$. Since if $m=0$ then any $p^m$th root of 1 is primitive, Condition \ref{r-c} applies only if $m \neq 0$. We remark that these two conditions can be expressed as
\begin{equation}\label{padic}
v_p(l^2-D_l) = r-m
\end{equation}
where $v_p(x)$ is the $p$-adic valuation of $x$.

We have now completely determined all of our matrices. That is,

$$A = \left( \begin{array} {cccc} I_{p^m} &  &  &  \\  & \Lambda I_{p^m} &  &  \\  &  & \ddots &  \\  &  &  & \Lambda^{{p^r}-1}I_{p^m} \end{array} \right),$$

$$ B = \left( \begin{array} {cccc}
0_{p^m} &  &  &  I_{p^m} \\
 I_{p^m} & \ddots &  &  \\
    & \ddots & \ddots &  \\
       &  & I_{p^m} & 0_{p^m} \end{array} \right),$$

$$A_d = \left( \begin{array} {cccc} J &  &  &  \\  & \Lambda^lJ &  &  \\  &  & \ddots &  \\  &  &  & \Lambda^{(p^r-1)l}J \end{array} \right),$$

\noindent and

$$B_d = \left( \begin{array} {cccccc} 0 &  &  &R & & \\  & \ddots &  & & \ddots & \\  &  & \ddots & &  & R\\ R  &  &  & \ddots & &  \\ & \ddots & & & \ddots & \\ & & R & & & 0 \end{array} \right)$$

\noindent where

$$ J = \left( \begin{array} {cccc} 1 &  &  &  \\  & \Lambda^{l^2-{D_l}}  &  &  \\  &  & \ddots &  \\  &  &  & \Lambda^{({p^r}-1)(l^2-{D_l})} \end{array} \right)$$

\noindent and

$$R=\left( \begin{array} {cccc} 0 &  &  & 1  \\ 1 & \ddots & &  \\  & \ddots
 & \ddots &  \\  &  & 1
 & 0 \end{array} \right).$$

\noindent There are $(1-p^{-1})p^r$ choices for the primitive root of unity $\Lambda.$ The number of choices for
%! $\Lambda_d=\lambda_l$, that is the solutions to Equations \ref{r-m} and \ref{r-c},
$\Lambda_d=\Lambda^l$---that is, the number of the solutions to Congruences \ref{r-m} and \ref{r-c}---will be analyzed in Section \ref{pqqw}.\\

\noindent \textbf{Case 2}: $\Lambda_d$ is a primitive $p^r$th root of unity. The analysis in this case is similar to Case 1. However, in order to avoid overcounting, we also assume the condition

\begin{equation}
\Lambda \  \text{is not a primitive} \ p^r \text{th root of unity.}
\label{eq3}
\end{equation}
\\

Following the methods in Case 1, but switching the roles of $B$ and $B_d$, we obtain the matrices

$$A = \left( \begin{array} {cccc} I_{p^m} &  &  &  \\  & \Lambda_d I_{p^m} &  &  \\  &  & \ddots &  \\  &  &  & \Lambda_d^{{p^r}-1}I_{p^m} \end{array} \right),$$

$$ B_d = \left( \begin{array} {cccc}
0_{p^m} &  &  &  I_{p^m} \\
 I_{p^m} & \ddots &  &  \\
    & \ddots & \ddots &  \\
       &  & I_{p^m} & 0_{p^m} \end{array} \right),$$

 $$A_d = \left( \begin{array} {cccc} J &  &  &  \\  & \Lambda_d^{l{D_l}}J &  &  \\  &  & \ddots &  \\  &  &  & \Lambda_d^{(p^r-1)l{D_l}}J \end{array} \right),$$ and

$$B = \left( \begin{array} {cccccc} 0 &  &  &R & & \\  & \ddots &  & & \ddots & \\  &  & \ddots & &  & R\\ R  &  &  & \ddots & &  \\ & \ddots & & & \ddots & \\ & & R & & & 0 \end{array} \right)$$

where

$$ J = \left( \begin{array} {cccc} 1 &  &  &  \\  & \Lambda_d^{l^2{D_l}-1}  &  &  \\  &  & \ddots &  \\  &  &  & \Lambda_d^{({p^r}-1)(l^2{D_l}-1)} \end{array} \right)$$

\noindent and

$$R=\left( \begin{array} {cccc} 0 &  &  & 1  \\ 1 & \ddots & &  \\  & \ddots
 & \ddots &  \\  &  & 1
 & 0 \end{array} \right).$$

%! which
This allows us to recover the conditions

\begin{equation}\label{r-m2}
l^2{D_l} \equiv 1 \ (\mbox{mod} \ p^{r-m})
\end{equation}

\noindent and

\begin{equation}\label{r-c2}
%! [Full stop deleted.] l^2{D_l} \not\equiv 1 \ (\mbox{mod} \ p^{r-m+1}) 
l^2{D_l} \not\equiv 1 \ (\mbox{mod} \ p^{r-m+1}) 
\end{equation}
\noindent for $0 \leq l <p^r$,
%! and Condition \ref{r-c2} not applying when $m=0$.
where Condition \ref{r-c2} does not apply when $m=0$.  We remark that these two equations can be written as
\begin{equation}\label{padic2}
v_p(l^2{D_l}-1)=r-m.
\end{equation}
This concludes the case distinctions.

Call Equations \ref{r-m} and \ref{r-c}, the \emph{Case-1-Conditions} and Equations \ref{r-m2} and \ref{r-c2} the \emph{Case-2-Conditions}. Note that the Case-2-Conditions imply that $l$ is invertible modulo $p$. However, since we are assuming Condition \ref{eq3} in Case 2, we have that $p \mid l$.  Thus, there are only solutions to the Case-2-Conditions when $r=m$.\\

Now we check that all matrices of this form give us an irreducible representation of $\Hz$. This is clear; take matrices $\{A,B,A_d,B_d,\Lambda,\Lambda_d\}$ of the forms above. Then an easy calculation shows that the associated relations for $\Hz$ hold. Since $z, z_d$ are commutators, they remain fixed under twisting. Since the matrices $A,B,A_d,$ and $B_d$ are determined by $\Lambda$ and $\Lambda_d$, two such ordered sets of matrices define twist-equivalent representations if and only if they coincide.

The form of $\zeta_{\Hz ,p}(s)$ depends on $p$ and $d$. This subsection therefore implies the following:
%that twist-isoclasses of $\Hz$ are in $1-1$ correspondence with choices for $\Lambda$ and $\Lambda_d$ that satisfy the Conditions above. We can determine $r_{p^n}$ by summing the number of choices for $\Lambda$ and $\Lambda_d$. This amounts to counting primitive $p^r$th roots of unity and also counting the number solutions to the Case 1 and Case 2 Conditions for $0 \leq l \leq p^r-1$ for all possible $r$ and $m$, that is, all unordered 2-partitions of $n$.  We make this argument a formal statement.
\begin{proposition}\label{correspondence}
There is a 1-1 correspondence between the set of $p^n$-dimensional twist-isoclasses of $\Hz$ and the set of solutions to Conditions \ref{r-m}, \ref{r-c}  in Case 1 or  \ref{eq3}, \ref{r-m2}, \ref{r-c2} in Case 2 such that  $r+m=n, 0\leq m \leq r \leq n.$
\end{proposition}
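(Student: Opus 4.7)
The proposition is essentially a bookkeeping statement that packages what has already been established in the case analysis of Section 3.1 together with the needed converse. The plan is to prove the correspondence in two directions, and then verify well-definedness of the map on twist-isoclasses.

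\medskip
\noindent \textbf{From twist-isoclasses to solutions.} Given a $p^n$-dimensional irreducible representation $\rho$ of $\Hz$, first apply Lemma \ref{L2} to twist $\rho$ to a good representation. Lemma \ref{eigenspacesize} then produces integers $r,m$ with $r+m=n$ describing the eigenspace structure of $A = \rho(x)$, and Lemma \ref{primitiveroot} splits into Case~1 or Case~2 according to which of $\Lambda,\Lambda_d$ is a primitive $p^r$th root of unity. Choosing a basis $\TT$ as in Section~3.2, the matrices $A,A_d,B,B_d$ take the explicit forms displayed, with the parameter $l$ satisfying the Case-1-Conditions (respectively Case-2-Conditions). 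Lemma \ref{L5} yields $0 \le m \le r \le n$. To avoid double-counting, I restrict Case~2 to those $\rho$ for which $\Lambda$ is not a primitive $p^r$th root of unity---this is exactly Condition~\ref{eq3}---so the two cases become mutually exclusive.

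\medskip
\noindent \textbf{Well-definedness on twist-isoclasses.} I must check that the associated tuple $(r,m,\Lambda,\Lambda_d)$ depends only on the twist-isoclass of $\rho$, not on the chosen good representative or the chosen basis. The numerics $r$ and $m$ are read off from the eigenspace decomposition of $A$, which is basis-independent. For the scalars, note that $z$ and $z_d$ are commutators in $\Hz$, so any one-dimensional representation $\chi$ satisfies $\chi(z)=\chi(z_d)=1$; hence $\Lambda = \rho(z)$ and $\Lambda_d = \rho(z_d)$ are invariant under twisting. Thus the map sending a twist-isoclass to its solution tuple is well-defined.

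\medskip
\noindent \textbf{From solutions to twist-isoclasses.} Given parameters $r,m,\Lambda,l$ satisfying the Case-1- or Case-2-Conditions (with the corresponding exclusivity clause), I define the matrices $A,B,A_d,B_d,\Lambda,\Lambda_d$ by the formulas displayed at the end of Section~3.2. A direct computation verifies the defining relations of $\Hz$: the commutators $[A,B]=\Lambda$, $[A,B_d]=[A_d,B]=\Lambda_d$, and $[A_d,B_d]=\Lambda^{D_l}$ follow by matrix multiplication using $JR = \Lambda^{l^2-D_l}RJ$ and $\Lambda_d = \Lambda^l$; the form of $D_l$ automatically accounts for the case split $d \equiv 2,3$ versus $d \equiv 1 \pmod 4$ in the presentation of $\Hz$. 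Irreducibility follows by running the stable-subspace argument of Section~3.2 in reverse: Condition~\ref{padic} (equivalently \ref{r-m} and \ref{r-c}) ensures $\Lambda^{l^2-D_l}$ is a primitive $p^m$th root of unity, so by \cite[Theorem~5]{NM} the representation of $\langle J,R\rangle\cong H$ on $E_1$ is irreducible; any proper $\Hz$-stable subspace would restrict to a proper $\langle J,R\rangle$-stable subspace of $E_1$ after projecting, a contradiction.

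\medskip
\noindent \textbf{Injectivity.} If two solution tuples yield twist-equivalent representations, their $\Lambda$ and $\Lambda_d$ agree (by the twist-invariance above), so the parameters $r,m,l$ agree. The explicit formulas then make $A,A_d,B,B_d$ identical, so the representations coincide on the nose. The main obstacle I anticipate is purely bookkeeping: ensuring that the Case~2 analysis (omitted in the text) genuinely produces all irreducible representations missed by Case~1 without re-introducing those from Case~1, which is precisely what Condition~\ref{eq3} enforces.
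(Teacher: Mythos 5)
Your proposal is correct and follows essentially the same route as the paper: the forward direction is the cumulative work of Lemmas \ref{L2}, \ref{eigenspacesize}, \ref{primitiveroot}, \ref{L5} and the basis choices of Section~3.2, well-definedness and injectivity rest on the twist-invariance of $\Lambda=\rho(z)$, $\Lambda_d=\rho(z_d)$ (since $z,z_d$ are commutators), and surjectivity is the verification that matrices of the displayed form satisfy the relations. The one place you go beyond what the paper writes explicitly is the irreducibility of the constructed representations: the paper dismisses this as ``clear,'' whereas you correctly run the stable-subspace argument in reverse via $\langle J,R\rangle\cong H(\mathbb{Z})$ acting on $E_1$ and \cite[Theorem~5]{NM} --- a worthwhile addition (though ``projecting'' should more precisely be ``intersecting with $E_1$,'' using $B^{-l}B_d$ to realize $R$ on $E_1$).
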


%Before the calculations we need a preliminary result.
%
%\begin{lemma}\label{quad}
%Let $ f(x) = x^2 + bx + c \modd p^n$ for $b,c \in \mathbb{Z}$ and $\alpha$ a root of $f$. If $f'(\alpha) \neq 0 \modd p$ then $f$ has at most two solutions.
%\end{lemma}
%
%\begin{proof}
%Let $\beta$ be another root of $f$. Then consider the expression $$f(\alpha)-f(\beta)  = (\alpha - \beta)(\alpha + \beta + b) \modd p^n.$$ Assume both  $(\alpha - \beta)$ and $(\alpha + \beta + b)$ are divisible by $p$. Then their sum is also divisible by $p$. But this sum is $2\alpha + b$ which, by assumption, is not $0 \modd p$. Therefore one of the factors must be $0 \modd p^n$ and either $\beta = \alpha$ or $\beta = -\alpha - b$. Note that if two solutions are unique mod $p$ then they are unique mod $p^n$.
%\end{proof}

\subsection{Calculating the Zeta Function}\label{pqqw}

Proposition \ref{correspondence} allows us to calculate each $\zeta_{\Hz,p}(s)$ by counting solutions to Conditions \ref{r-m} and \ref{r-c} in Case 1 and \ref{r-m2} and \ref{r-c2} in Case 2  modulo $p^n$ for each $n \geq 0.$ To count the number of solutions we use Hensel's Lemma to lift solutions of the Conditions modulo $p$ if $p$ is not ramified; if $p$ is ramified, the computation is nevertheless straightforward. We demonstrate the computations and then summarize the results in a table. We note three things: there are always $(1-p^{-1})p^r$ choices for $\Lambda$ in Case 1 and $\Lambda_d$ in Case 2, in Case 2 it is easy to see there are $(1-p^{-1})p^{n-1}$ solutions when $r=m$ and $0$ otherwise, and there is only $1$ irreducible twist-isoclass when $n=0$. The following cases assume $n \neq 0$.\\

For ease of understanding we tabulate the Conditions with their reference numbers:\\

\begin{tabular}{|l|c|c|}
\hline
Case 1 & $\Lambda \text{ is a primitive } p^r\text{th root of unity}$&\\
    &$l^2 \equiv {D_l} \ (\modd p^{r-m})$ & (\ref{r-m})\\

    & $l^2 \not\equiv {D_l} \ (\mbox{mod} \  p^{r-m+1}) \ \text{for} \  r \neq n$ & (\ref{r-c})  \\
\hline
Case 2 & $\Lambda_d \  \text{is a primitive} \ p^r \text{th root of unity}$ & \\
    &$\Lambda \  \text{is not a primitive} \ p^r \text{th root of unity}$ & \\
 &$l^2{D_l} \equiv 1 \ (\mbox{mod} \ p^{r-m})$ & (\ref{r-m2})\\
 &$l^2{D_l} \not\equiv 1 \ (\mbox{mod} \ p^{r-m+1}) \ \text{for} \  r \neq n $ & (\ref{r-c2})\\
\hline
\end{tabular}\\
\medskip

Assume $p$ is inert. This implies that $d$ is not a square modulo $p.$ Thus, in this case, there are no solutions to the Case-1 or Case-2-Conditions unless $r=m$. Given $\Lambda$, and since $r+m=n$, there are $p^{\frac{n}{2}}$ choices for $\Lambda_d$ in Case 1. Therefore, with Case 2 contributing $(1-p^{-1})p^{\frac{n}{2}}(p^{\frac{n}{2}-1})$ in the even case,

$$r_{p^n} =
\begin{cases}
(1-p^{-1})p^{\frac{n}{2}}p^{\frac{n}{2}}+(1-p^{-1})p^{\frac{n}{2}}(p^{\frac{n}{2}-1})&\text{\quad for even } n\\
0 &\text{\quad for odd } n
\end{cases}
$$

\noindent and

\begin{align*}
 \zeta_{\Hz,p}(s) &= \sum_{n=0}^{\infty} r_{p^n}p^{-ns} = 1 + \sum_{m=1}^{\infty}(1-p^{-2})(p^{2-2s})^m = \frac{1-p^{-2s}}{1-p^{2-2s}}.
\end{align*}

%\begin{tabular}{|l||c|c|c|c||c|}
%\hline\\
%$p,d$-property & $m=0$ & $1 \leq m \leq \frac{n}{2}$ & $m = \frac{n}{2}$ & $r_{p^n}$ & $\zeta_p (s)$\\
%\hline\\
%inert &  0 & 0 & $\phi({p^{n-1}})$& $\phi({p^{n-1}})$ for even $n$ & $\frac{1-p^{-2s}}{1-p^{2-2s}}$\\
%\hline\\
%splits & $2\phi(p^n)$ & $2(1-p^{-1})\phi(p^n)$ & $ (1-p^{-1})\phi(p^n)$ & $\phi(p^n)[n(1-p^{-1}) + 1 + p^{-1}]$ & $ \left(\dfrac{1-p^{-s}}{1-p^{1-s}}\right)^2$ \\
%\hline\\
%ramified &
%\end{tabular}

Assume $p$ splits. There are two solutions to the equation $l^2 \equiv {D_l} \ (\modd p)$ and Hensel's Lemma allows us to ``lift" these solutions to solutions in $\mathbb{Z}/p^{r-m}\mathbb{Z}$, thus giving us the two unique solutions to $l^2 \equiv {D_l} \ (\modd p^{r-m})$. When $r=n$, there are two solutions to Condition \ref{r-m} and therefore $2(1-p^{-1}){p^n}$ choices for the pair $\Lambda$ and $\Lambda_d$ in the Case-1-Conditions. If, for fixed $r$ and $m$,  $r>m$ and $m >0$ then there are two solutions in $\mathbb{Z}/p^{r-m}\mathbb{Z}$ to Condition \ref{r-m} and therefore $2p^m$ solutions for $0\leq l \leq p^r -1$. Of these solutions, all but $2p^{m-1}$ satisfy Condition \ref{r-c}. Therefore, given $\Lambda$, there are $2(1-p^{-1})p^m$ choices for $\Lambda_d$ and $2(1-p^{-1})^2p^n$ choices for the pair $\Lambda$ and $\Lambda_d$ in Case 1. If $r=m=\frac{n}{2}$ then there are $p^m$ solutions to Condition \ref{r-m}, of which  all but $2p^{m-1}$ satisfy Condition \ref{r-c}. Therefore there are $(1-2p^{-1})(1-p^{-1}){p^n}$ choices for the pair $\Lambda$ and $\Lambda_d$ in Case 1. Summing all cases together, and noting that the Case 2 contribution is 0 in the odd case and $(1-p^{-1})p^{n-1}$ in the even case, we have

\begin{equation}
r_{p^n}=
\begin{cases}

2(1-p^{-1})p^n + \frac{n-1}{2}2(1-p^{-1})^2p^n \text{\qquad if $n$ is odd}\\
2(1-p^{-1})p^n + \frac{n-2}{2}2(1-p^{-1})^2p^n \\
\ + (1-2p^{-1})(1-p^{-1}){p^n} + (1-p^{-1}){p^{n-1}} \text{\quad if $n$ is even}.

\end{cases}
\end{equation}

\noindent Strikingly, in both cases this simplifies to
$$r_{p^n}= ((1+p^{-1})+(1-p^{-1})n)(1-p^{-1})p^n$$

\noindent and therefore

\begin{align*}
 \zeta_{\Hz,p}(s) &= \sum_{n=0}^{\infty} r_{p^n}p^{-ns}\\
  &= 1+ \sum_{n=1}^{\infty}(1-p^{-1})(p^{1-s})^n [(1+p^{-1})+(1-p^{-1})n] \\
  &= 1+ \sum_{n=1}^{\infty}(1-p^{-2}) (p^{1-s})^n + \sum_{n=1}^{\infty}(1-p^{-1})^{2} n(p^{1-s})^n  \\
  &= 1+ \dfrac{(1-p^{-2})p^{1-s}}{1-p^{1-s}}+\dfrac{(1-p^{-1})^{2}p^{1-s}}{(1-p^{1-s})^2} \\
   &= \left(\dfrac{1-p^{-s}}{1-p^{1-s}}\right)^2.
\end{align*}

Assume $p$ is ramified. This is the case if  $d \equiv 0 \ (\modd p)$ for any $d$ or if $p=2$ and $d \equiv 2,3 \ (\modd 4)$. Then there are solutions to the Case-1 and Case-2 Conditions only when $r-m =0$ or $r-m=1$. If $d \equiv 2,3 \ (\modd 4)$ and $d \equiv 0 \ (\modd p)$ then $0$ is the solution to $l^2 \equiv d \ (\modd p)$ but since $d$ is squarefree $d \equiv kp \mod p^2$ for some invertible $k \in \mathbb{Z}/p^2\mathbb{Z}.$ Therefore $l^2=d$ has no solutions modulo $p^2$. If $d \equiv 2,3 \ (\modd 4)$ and $p=2$ then $l^2 \equiv d$ has a solution modulo $p$ but since $d$ is a quadratic non-residue modulo $4$ it has no solutions modulo $4$. If $d \equiv 1 \ (\modd 4)$ and $d \equiv 0 \ (\modd p)$ then $l^2-l+\frac{d-1}{4}=0$ has the unique solution $l \equiv 2^{-1} \ (\modd p)$. And since $d$ is squarefree $d \equiv kp \ (\modd p^2)$ for some invertible $k \in \mathbb{Z}/p^2\mathbb{Z}$. Then the above equation can be rearranged to the form $(2l-1)^2 \equiv kp \ (\modd p^2)$, which clearly has no solutions.  Then if $r-m = 0$, then there are $p^m$ solutions to Condition \ref{r-m} and all but $p^{m-1}$ of these satisfy Condition \ref{r-c}. Therefore, given $\Lambda$ there are $(1-p^{-1})p^m$ choices for $\Lambda_d$ and $(1-p^{-1}) (1-p^{-1})p^n$ choices for the pair $\Lambda$ and $\Lambda_d$ in Case 1 and, as usual, $(1-p^{-1})p^{n-1}$ choices in Case 2. If $r-m = 1$, a similar calculation to the ones above yields that there are $(1-p^{-1})p^n$ choices in Case 1. Therefore

$$r_{p^n} = (1-p^{-1})p^n$$

\noindent and thus

\begin{align*}
 \zeta_{\Hz,p}(s) &{}= 1+\sum_{n=1}^{\infty}(1-p^{-1})p^np^{-ns}\\
                  &{}=  1+(1-p^{-1})\sum_{n=1}^{\infty}(p^{1-s})^n\\
                  &{}= \frac{1-p^{-s}}{1-p^{1-s}}.
\end{align*}

The preceding results are tabulated for easy reference. Note that, for $k \geq 1,$ we have that  $\phi(p^k)=(1-p^{-1})p^k$:

\begin{tabular}{|l||c|c|}
\hline
prime behaviour & $r_{p^n}; n > 0$ &  $\zeta_{\Hz,p} (s)$ \\
\hline
& &\\[-1em]

\hline
 & & \\[-1.8ex]
inert & $\begin{cases} (1+p^{-1}) \phi({p^{n}}) &\text{ for even } n\\ 0 &\text{ for odd } n \end{cases}$  &  $\dfrac{1-p^{-2s}}{1-p^{2-2s}}$ \\
& & \\[-1.8ex]
\hline
& & \\[-1.8ex]
splits & $\phi(p^n)\big(n(1-p^{-1}) + 1 + p^{-1}\big)$ & $ \left(\dfrac{1-p^{-s}}{1-p^{1-s}}\right)^2$ \\
& & \\[-1.8ex]
\hline
& & \\[-1.8ex]
ramified & $\phi(p^n)$ & $\dfrac{1-p^{-s}}{1-p^{1-s}}$\\

\hline
\end{tabular}

%From the theory of prime factorization of ideals \cite[Section 10.5]{Cohen} we find that the $p$-local Dedekind zeta function of $\mathbb{Q} [\dd]$ is:
%
%$$
%\zeta^{\mathbf{D}}_{\mathbb{Q}[\dd],p}(s) =
%\begin{cases}
%
%\dfrac{1}{1-p^{-2s}} &\text{if $p$ is inert}\\
%\\
% \left(\dfrac{1}{1-p^{-s}}\right)^2 &\text{if  $p$ splits}\\
%
%\\
%\dfrac{1}{1-p^{-s}} &\text{if $p$ is ramified} \\
%\\
%
%
%\end{cases}
%$$
%\vspace{6mm}
%where $p$ is prime.
\vspace{1cm}
By Proposition \ref{oror} we can say that

$$ \zeta_{\Hz} (s) = \dfrac {\zeta^{\mathbf{D}}_{\mathbb{Q}(\dd)} (s-1)}{\zeta^{\mathbf{D}}_{\mathbb{Q}(\dd)} (s)}.$$

\medskip
{\em Acknowledgements}: The authour would like to thank his supervisor Ben Martin for overall guidance and the many great suggestions for the structure of the paper. The author would also like to show appreciation to the reviewer for their suggestions.

\medskip

\renewcommand{\bibname}{References}
\bibliographystyle{plain}
\bibliography{prop2good}{}

\end{document}